\title{The $T$-algebra spectral sequence: comparisons and applications}
\author{Justin Noel}                                               
\address{University of Regensburg\\NWF I - Mathematik\\D-93040 Regensburg\\Germany}
\email{justin.noel@mathematik.uni-regensburg.de}
\urladdr{http://www.nullplug.org/}
\numberwithin{equation}{section} 
\numberwithin{figure}{section} 
\theoremstyle{plain}
\newtheorem{prop}{Proposition}[section]
\newtheorem{thm}{Theorem}[section]           
\theoremstyle{definition}
\newtheorem{defn}{Definition}[section]
\newtheorem{remark}{Remark}[section]
\let\c@equation\c@thm\makeatother
\let\c@figure\c@thm\makeatother
\let\c@prop\c@thm\makeatother
\let\c@defn\c@thm\makeatother
\let\c@remark\c@thm\makeatother
\newcommand{\hkmod}{\HkMod}
\newcommand{\shkmod}{\sHkMod}
\newcommand{\fp}{\mathbb{F}_p}
\newcommand{\fpbar}{\overline{\mathbb{F}}_p}
\newcommand{\phat}{\widehat{p}}
\begin{document}
\begin{abstract}
 In previous work with Niles Johnson the author constructed a spectral sequence for computing homotopy groups of spaces of maps between structured objects such as $G$--spaces and $\mathcal{E}_n$--ring spectra. In this paper we study special cases of this spectral sequence in detail. Under certain assumptions, we show that the Goerss--Hopkins spectral sequence and the $T$--algebra spectral sequence agree. Under further assumptions, we can apply a variation of an argument due to Jennifer French and show that these spectral sequences agree with the unstable Adams spectral sequence.

From these equivalences we obtain information about filtration and differentials. Using these equivalences we construct the homological and cohomological Bockstein spectral sequences topologically.  We apply these spectral sequences to show that Hirzebruch genera can be lifted to $\mathcal{E}_\infty$--ring maps and that the forgetful functor from $\mathcal{E}_\infty$--algebras in $H\overline{\mathbb{F}}_p$--modules to $H_\infty$--algebras is neither full nor faithful.
 
\end{abstract}

\maketitle


\section{Introduction}
In \cite{JoN13} the author and Niles Johnson constructed the $T$--algebra spectral sequence which, as a special case, can be used to calculate the homotopy groups of the space of $\mathcal{E}_{\infty}$--algebra maps between two spectra. The purpose of this spectral sequence is to analyze the filtration between maps of $\mathcal{E}_{\infty}$--algebras `up to homotopy,' i.e., $H_{\infty}$--algebras, and actual $\mathcal{E}_{\infty}$--algebra maps.

Goerss and Hopkins also constructed a spectral sequence for computing $\mathcal{E}_{\infty}$--algebra maps \cite{GoH04} and below we will show that these two spectral sequences agree after restricting to algebras in $Hk$--modules for a field $k$. Although there is a direct argument for this equivalence due to Bousfield \cite[Thm.~6.2]{Bou03}, we will use a more general argument due to French \cite{Fre10}.

By taking function spectra one obtains a natural map:
\[\Top(X,Y_{\phat})\rightarrow \EinftyHfpbar(H\fpbar^{Y},H\fpbar^{X}),\]
where $Y_{\phat}$ is the $p$--nilpotent completion of $Y$ \cite{BoK72}. French showed that, under mild hypotheses, this map induces an isomorphism between the unstable Adams spectral sequence of Bousfield and Kan, which computes the homotopy groups of the source, and the Goerss--Hopkins spectral sequence which computes the homotopy groups of the target. This result has an obvious rational analogue and her argument can be applied to prove this case as well. These two results give detailed information about the embedding of rational and $p$--adic homotopy theory into categories of $\mathcal{E}_{\infty}$--ring spectra. Since we would like to use these results to analyze the latter categories we recall her argument in \Cref{sec:e2-model}.

We state these results and their immediate consequences in \Cref{sec:results}. In \Cref{sec:examples} we apply the $T$--algebra spectral sequence and these comparisons to show:
\begin{enumerate}
  \item Hirzebruch genera admit unique $\mathcal{E}_{\infty}$ lifts (\Cref{thm:genera}).
  \item Some of the results about $\mathcal{E}_\infty$ maps from \cite{JoN13} can be recovered using classical unstable homotopy theory (\Cref{sec:earlier}).
  \item If $X$ is a 2-connected spectrum, then every $H_{\infty}$--map from $\Sigma^{\infty}_+\Omega^{\infty}X$ to a $K(1)$--local $\mathcal{E}_{\infty}$--ring spectrum can be lifted to an $\mathcal{E}_{\infty}$--map (\Cref{prop:k-1}).
  \item In the category of $\mathcal{E}_{\infty}$--algebras in $H\fpbar$--modules, there is an uncountable family of homotopy classes of maps which induce the same $H_{\infty}$--algebra map (\Cref{thm:h-infty-vs-e-infty}.\ref{en:uncountable}).
  \item For each $n\geq 0$, there is an $H_{\infty}$--map in $H\fpbar$--modules for which the first $n$ obstructions to lifting to an $\mathcal{E}_{\infty}$--algebra map vanish, but which fails to lift to an $\mathcal{E}_{\infty}$--map. (\Cref{thm:h-infty-vs-e-infty}.\ref{en:d-k}).
  \item The Bockstein spectral sequences can be constructed as special examples of the $T$--algebra spectral sequence (\Cref{thm:bockstein}).
\end{enumerate}

This paper was written while the author was a visitor at the Max Planck Institute. He would like to thank the institute for their support and for providing a stimulating work environment. He would also like to thank the anonymous referee for their careful reading of this paper and their helpful comments.

\section{Comparisons}\label{sec:results}
\subsection{The $T$--algebra spectra sequence}
Let $T$ be a simplicial monad acting on a simplicial model category $\sC$. Under frequently satisfied hypotheses, the Eilenberg--Moore category of algebras $\sC_T$ inherits a simplicial model structure such that a morphism of $T$--algebras is a weak equivalence if and only if it is a weak equivalence in $\sC$.

Let $T=UF$ be the decomposition of $T$ into the left adjoint \[ F\co \sC\rightarrow \sC_T\] followed by the right adjoint $U$. Given $T$--algebras $X$ and $Y$, one can form the standard cotriple/bar resolution \[B_{\bul}(F,T,UX)=B_{\bul }X\rightarrow X\] of $X$ and then map this resolution into $Y$ to obtain a cosimplicial space $\sC_{T}(B_{\bul}X,Y)$. Now for any cosimplicial space one can construct the associated $\Tot$ spectral sequence of Bousfield and Kan \cite{Bok73a,Bou89}. One might hope that in our case this spectral sequence computes the homotopy groups of the derived mapping space $\sC_T(X,Y)$. This turns out to be too much to expect, even if $X$ and $Y$ are both cofibrant and fibrant.

It is important to note that the Bousfield--Kan spectral sequence is not a spectral sequence in the conventional sense. There are obstructions to obtaining a well-defined $E_n$ term for $n>1$, some of the terms are not abelian groups, and some of the differentials are relational. Moreover, there are generally obstructions to obtaining a base point in the totalization. Although the details are intricate, they are carefully treated in \cite{Bou89}, where it is shown that the spectral sequence simplifies when `certain Whitehead products vanish'. In these cases, the $E_2$ term is well-defined and every term except for $E_2^{0,0}$ is an abelian group. 

As explained in \cite[Thm.~4.5]{JoN13} this condition holds for all of the examples considered in this paper because, in each cosimplicial degree, we have an $H$--space. For this simplification, we do not require, nor is it usually true, that the cosimplicial structure maps be maps of $H$--spaces. In these nice cases, the obstructions to the existence of a base point lie in $E_2^{s,s-1}$ for $s\geq 2$. In our examples, we will either have an obvious base point or we will show that these groups vanish.

Using model categories, one can construct this spectral sequence by either first replacing $B_{\bul }X$ with a Reedy cofibrant simplicial diagram, or if $\sC_T$ is cofibrantly generated, taking a cofibrant replacement in the projective model structure. The new simplicial diagram has a geometric realization (resp.~homotopy colimit over $\Delta^{\op}$) which admits a well-behaved filtration by its skeleta (resp.~homotopy colimits over $\Delta_{\leq n}^{\op}$), which gives rise to a tower of fibrations. Taking homotopy groups at a fixed base point (which we will usually suppress), one obtains interweaving long exact sequences and an associated spectral sequence\footnote{Using \cite[Remark A.2.9.27]{Lur09} it is easy to see that the two approaches are equivalent if $\sC_T$ is combinatorial.}.

Unfortunately, even when this spectral sequence collapses and there are no $\lim^1$ issues it still may not converge to the homotopy groups of $\pi_* \sC_T(X,Y)$ because we have changed the homotopy type of the objects being studied. To obtain the correct convergence properties we assume that $X$ is resolvable \cite[Defn.~3.18]{JoN13}. This hypothesis is often satisfied and guarantees that $B_{\bul}X$ is already Reedy cofibrant. This is enough to construct a spectral sequence converging to $\pi_* \sC_T(|B_{\bul}X|,Y)$. Finally if $T$ commutes with geometric realizations or equivalently, the forgetful functor $U\colon \sC_T\rightarrow \sC$ preserves geometric realizations, then $|B_{\bul }X|\rightarrow X$ is a weak equivalence of $T$--algebras and we obtain the desired convergence. As shown in \cite[\S 3]{JoN13} each of these hypotheses will be satisfied in all of the examples we will consider.

\begin{prop}\label{prop:t-alg-ss}
  Let $k$ be a field. For any $\mathcal{E}_n$--algebras $X$ and $Y$ in $Hk$--modules there is a $T$--algebra spectral sequence converging to \[\pi_* \EnHk(X,Y).\] Moreover, the $E_2$ term of this spectral sequence is always defined and there are identifications
  \[E_{1}^{0,0} \cong \ho \HkMod(X,Y)\cong \kMod(\pi_* X,\pi_* Y)\]
  and
   \[ E_{2}^{0,0} \cong \Hnk(X,Y)\]
  such that the edge homomorphisms are the evident forgetful functors. Here $\ho\HkMod$ is equivalent to $\kMod$, the category of graded $k$--modules and $\Hnk$ is the category of $H_n$--algebras (or homotopy $\mathcal{E}_n$--algebras) in $\ho \HkMod$ \cite{BMMS86}.
\end{prop}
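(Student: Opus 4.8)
The plan is to build the spectral sequence from the explicit cosimplicial resolution discussed in the preceding paragraphs, and then to identify the bottom-row low-degree terms by unwinding the definitions of the cotriple resolution and the $\Tot$ spectral sequence. The convergence claim is essentially already settled in the preamble: since $X$ and $Y$ are $\mathcal{E}_n$--algebras in $Hk$--modules, I would invoke the hypotheses assembled above (the $H$--space structure in each cosimplicial degree forces the Whitehead products to vanish, so the $E_2$ term is defined; resolvability of $X$ and the fact that $U$ preserves geometric realizations give the correct convergence to $\pi_*\EnHk(X,Y)$). So the real content is the computation of $E_1^{0,0}$ and $E_2^{0,0}$.

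\textbf{Identifying $E_1^{0,0}$.} First I would recall that for a cosimplicial space $Z^{\bul}$ the $\Tot$ spectral sequence has $E_1^{s,t}=\pi_t Z^s$ with $d_1$ the alternating-sum coboundary. In our situation $Z^{\bul}=\sC_T(B_{\bul}X,Y)$, so $Z^0=\sC_T(F UX, Y)\cong \sC(UX, UY)=\HkMod(X,Y)$ by the free--forgetful adjunction $(F,U)$. Taking $\pi_0$ at the level of derived mapping spaces identifies $E_1^{0,0}$ with $\ho\HkMod(X,Y)$. The second isomorphism, $\ho\HkMod(X,Y)\cong\kMod(\pi_*X,\pi_*Y)$, is exactly the statement that $\ho\HkMod\simeq\kMod$: over a field $k$ every $Hk$--module splits as a wedge of suspensions of $Hk$, so the homotopy category is equivalent to graded $k$--vector spaces and morphisms are detected on homotopy groups. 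I would cite this equivalence and note that the edge homomorphism is the forgetful functor sending an $\mathcal{E}_n$--algebra map to its underlying $Hk$--module map, i.e., to the induced $k$--linear map on homotopy.

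\textbf{Identifying $E_2^{0,0}$.} Here I would use that $E_2^{0,0}$ is the equalizer (the $\pi_0$ of the cosimplicial kernel) of the two coface maps $d^0,d^1\colon E_1^{0,0}\rightrightarrows E_1^{1,0}$, so $E_2^{0,0}$ consists of those $Hk$--module maps $f\colon X\to Y$ that respect the monad structure up to the coherence encoded in one cosimplicial degree. The key step is to recognize that this equalizer is precisely the set of morphisms in the homotopy category of $T$--algebras that are compatible with the $H_n$--structure maps, which is by definition $\Hnk(X,Y)$, the hom-set in the category of $H_n$--algebras (homotopy $\mathcal{E}_n$--algebras) in $\ho\HkMod$ in the sense of \cite{BMMS86}. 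Concretely, a class in $E_2^{0,0}$ is a map on homotopy groups commuting with the action of the homology operations parametrized by $T$, which is exactly the data of an $H_n$--algebra map. Again the edge homomorphism is the forgetful functor, now from $H_n$--algebras to the underlying graded $k$--modules.

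\textbf{The main obstacle} will be the $E_2^{0,0}$ identification: making precise that the single cosimplicial equalizer condition captures \emph{all} of the $H_n$--algebra coherence and nothing more. One must check that the $T$--algebra structure on $B_{\bul}X$, after passing to homotopy, encodes exactly the operations and relations defining an $H_n$--algebra over $k$, and that the cosimplicial coface maps translate into the compatibility of $f$ with every such operation. I would handle this by comparing the monadic bar construction for the $\mathcal{E}_n$--operad with the free $H_n$--algebra functor on $\ho\HkMod$ built from the homology operations of \cite{BMMS86}, reducing the coherence bookkeeping to the standard fact that $\pi_*$ of the free $\mathcal{E}_n$--algebra on an $Hk$--module is the free $H_n$--algebra on its homotopy. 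The remaining steps are then formal unwinding of the $\Tot$ spectral sequence and the adjunction.
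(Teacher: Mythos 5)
Your identifications of $E_1^{0,0}$ and $E_2^{0,0}$ retrace, in outline, the argument the paper simply defers to its predecessor ([JoN13, \S 4.3.3, \S 5.3]): the free--forgetful adjunction identifies the zeroth cosimplicial level, and the equalizer of the two cofaces, combined with the fact that $\pi_*$ of a free $\mathcal{E}_n$--algebra on an $Hk$--module is the free $H_n$--algebra on its homotopy, identifies $E_2^{0,0}$ with $\Hnk(X,Y)$. That part is acceptable as a sketch. The genuine gap is where you ``invoke the hypotheses assembled above,'' in particular the resolvability of $X$. Resolvability is \emph{not} a hypothesis of the proposition --- the statement is for arbitrary $\mathcal{E}_n$--algebras $X$ and $Y$ in $Hk$--modules --- and the preamble only says that this hypothesis is ``often satisfied'' and that it holds in the examples considered. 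Establishing that \emph{every} such $X$ is resolvable is precisely the content of the paper's proof, and it is not formal: one must exhibit an $\mathcal{E}_n$--operad $\sO$ that is simultaneously cofibrant and has $\sO(1)$ a single point, which is what guarantees that the bar resolution $B_{\bullet}X$ is Reedy cofibrant and that $|B_{\bullet}X|\rightarrow X$ is a weak equivalence. The paper obtains such an operad by applying the Boardman--Vogt $W$--construction to Berger's combinatorial $\mathcal{E}_n$--operad built from Smith's filtration.

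Without that step, your construction only produces a spectral sequence converging to $\pi_*\EnHk(|B_{\bullet}X|,Y)$ for a simplicial object whose realization you have not compared to $X$, so the convergence statement of the proposition remains unproved. You should either supply the operad argument or an equivalent verification that the bar resolution is Reedy cofibrant for every $X$ before appealing to the convergence machinery of the preamble.
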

\begin{proof}
  The proof is exactly as in \cite[\S 4.3.3, \S 5.3]{JoN13} once we have shown that every $X$ is resolvable. To do this we need an $\mathcal{E}_n$--operad $\sO$ which is cofibrant and whose first component $\sO(1)$ is a point. The combinatorial $\mathcal{E}_n$--operad constructed by Berger \cite{Ber97} from Smith's filtration \cite{Smi89} satisfies the latter property. If we then apply the Boardman--Vogt $W_{\bul}$ construction to Berger's operad, we obtain an operad $\sO$ with both of the desired properties by \cite[Prop.~4.12.(c)]{JoN13}.
\end{proof}

When computing spaces of maps of $\mathcal{E}_1=\mathcal{A}_{\infty}$--algebras, $H_1$--algebras in $\ho \HkMod$ can be identified with associative graded $k$--algebras and the remainder of the $E_2$ term can be identified with the Quillen cohomology of the associative $k$--algebra $\pi_* X$ \cite[\S 5.3]{JoN13}. In positive cohomological degrees these cohomology groups can be identified with Hochschild cohomology groups \cite[Prop.~3.6]{Qui70}.

As described in \cite{May70} and \cite[\S III.1]{BMMS86}, when $n=\infty$ and $k$ is a field of characteristic $p$, $\pi_*X$ admits the structure of an graded commutative $k$--algebra with an action by the extended Dyer--Lashof algebra. In other words, in addition to the Bockstein, there are operations $Q^i$  acting on $\pi_*X$ for each integer $i$ which satisfy Adem and instability relations. The category of such algebras is an algebraic category in the sense of \cite{Qui70} and as a consequence one can construct a model structure on simplicial algebras over the extended Dyer--Lashof algebra and define cohomology with coefficients in an abelian group object in this category. As an example, if $t$ is positive, $\pi_* Y^{S^t}\cong \pi_{*}Y \oplus \pi_{*+t} Y$ is an abelian group object over $\pi_* Y$.

\begin{prop}[{\cite[Thm.~5.6]{JoN13}}]\label{prop:ident-e2}
  If we set $n=\infty$ in the spectral sequence of \Cref{prop:t-alg-ss} and let $f\colon \pi_* Y \rightarrow \pi_* X$ be a map of algebras over the extended Dyer--Lashof algebra, then
   \[E_2^{s,t}=HQ^s_{\pi_*Y}(\pi_* X;\pi_* Y^{S^t})\] 
   for $t>0$. Here the right hand side is the Quillen cohomology of the algebra $\pi_*X$ over the extended Dyer--Lashof algebra over $\pi_* Y$, via $f$, with coefficients in the abelian group object $\pi_* Y^{S^t}$.
\end{prop}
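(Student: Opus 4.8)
The plan is to identify the cosimplicial abelian group that computes the $E_2$ term with the cochain complex defining Quillen cohomology, following the template of Quillen and Goerss--Hopkins. By construction the $\Tot$ spectral sequence of \Cref{prop:t-alg-ss} has
\[E_2^{s,t}\cong\pi^s\bigl([s']\mapsto\pi_t\,\sC_T(B_{s'}X,Y)\bigr),\qquad t>0,\]
the cohomotopy of the cosimplicial abelian group obtained by applying $\pi_t$ in each cosimplicial degree; these homotopy groups are abelian for $t\geq 1$ by the $H$--space observation above. So the whole task is to compute this cosimplicial abelian group algebraically.

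First I would strip the free algebras out of the bar resolution. Since $B_sX=FT^sUX$ and $F$ is left adjoint to $U$, the adjunction gives $\sC_T(B_sX,Y)\cong\sC(T^sUX,UY)$, a mapping space in $\HkMod$. Because $k$ is a field every $Hk$--module is formal, so the homotopy of such a mapping space carries no higher correction terms and is computed by graded $k$--linear maps, $\pi_t\,\sC(M,UY)\cong\kMod(\pi_*M,\pi_{*+t}Y)$ for $t>0$. The second input is the classical Dyer--Lashof computation: since $k$ has characteristic $p$ and our operad satisfies $\sO(1)=*$, the homotopy $\pi_*FM$ is the free algebra over the extended Dyer--Lashof algebra on $\pi_*M$. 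Consequently, applying $\pi_*$ levelwise to $B_{\bul}X$ yields a simplicial object which is degreewise free over the extended Dyer--Lashof algebra, namely the algebraic cotriple resolution of $\pi_*X$ as an algebra over $\pi_*Y$ via $f$.

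It remains to match $\pi_t$ of each mapping space with the algebraic derivation functor. For $t>0$ and the basepoint given by a map $A\to Y$, the group $\pi_t\,\sC_T(A,Y)$ is the set of lifts of this map along the projection $Y^{S^t}\to Y$ induced by the cotensor by the $t$--sphere. As recalled above $\pi_*Y^{S^t}\cong\pi_*Y\oplus\pi_{*+t}Y$ is an abelian group object over $\pi_*Y$, a split square-zero extension, so such lifts correspond exactly to derivations of $\pi_*A$ over $\pi_*Y$ with values in the coefficient module $\pi_{*+t}Y$. Specializing to $A=B_sX$ and combining with the two previous steps produces an isomorphism of cosimplicial abelian groups between $[s]\mapsto\pi_t\,\sC_T(B_sX,Y)$ and the complex of derivations of the cotriple resolution of $\pi_*X$ with coefficients in $\pi_*Y^{S^t}$. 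Taking $\pi^s$ of both sides gives the asserted formula, since $HQ^s_{\pi_*Y}(\pi_*X;\pi_*Y^{S^t})$ is by definition the $s$--th cohomotopy of this complex.

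The main obstacle is the derivation step: one must check carefully that $\pi_t$ of a mapping space out of a free $\mathcal{E}_\infty$--algebra really computes the algebraic derivations with the correct extended Dyer--Lashof action and the correct coefficients, and that over a field the levelwise homotopy of the topological bar construction is a bona fide degreewise-free cotriple resolution, so that its cohomotopy computes Quillen cohomology with no hyper-derived or $\lim^1$ corrections. This careful matching of the topological and algebraic obstruction theories is precisely the content of \cite[Thm.~5.6]{JoN13}, whose argument I would follow.
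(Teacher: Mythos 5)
The paper offers no proof of this proposition beyond the citation: it is stated as a quotation of \cite[Thm.~5.6]{JoN13}, and the surrounding discussion (the adjunction $\sC_T(FT^sUX,Y)\cong\sC(T^sUX,UY)$, the Dyer--Lashof structure on $\pi_*$ of free $\mathcal{E}_\infty$--algebras from \cite{May70} and \cite{BMMS86}, and $\pi_*Y^{S^t}$ as a square-zero abelian group object) is exactly the scaffolding your outline assembles. Your reconstruction is correct and follows essentially the same route as the cited source, so there is nothing to flag.
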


\begin{remark}
  Assuming that the functorial identifications of the homology of free $\mathcal{E}_n$--algebras in based spaces of \cite[\S III]{CLM76} can be extended to spectra and that one can form the associated algebraic category of $H_n$--algebras, then the remainder of the $E_2$ term in \Cref{prop:t-alg-ss} can be identified with appropriate Quillen cohomology groups just as in the $\mathcal{A}_{\infty}$ and $\mathcal{E}_{\infty}$ cases.
\end{remark}

\begin{remark}\label{rem:char-0}
  If $k$ is a field of characteristic zero, then there are no Dyer--Lashof operations and the homotopy theory of $\mathcal{E}_\infty$--algebras is equivalent to the homotopy theory of graded commutative differential graded algebras over $k$. Moreover, the $H_{\infty}$--algebras can be identified with graded commutative $k$--algebras. As in \Cref{prop:ident-e2}, we can then identify the $E_2$ term with the classical Andr\'e--Quillen cohomology groups \cite[\S 5.5]{JoN13}.
\end{remark}

\subsection{The Goerss--Hopkins Spectral Sequence}
The Goerss--Hopkins spectral sequence is another spectral sequence which can be used to calculate the space of $\sO$--algebra maps for a suitable operad $\sO$. This spectral sequence is generally constructed by resolving both the source and the target of the space of maps and forming the total complex. The resolution of the source uses a resolution model structure which will be described in some detail in \Cref{sec:e2-model}. For the target one uses an Adams resolution with respect to some nice homology theory $E$. This latter resolution is unnecessary when $E=Hk$ and we are working in the category of $Hk$--modules instead of spectra.

As described in \cite[\S 6]{GoH04}, the $E_2$ terms of the Goerss--Hopkins spectral sequences computing $\mathcal{E}_{\infty}$--algebra maps are abstractly isomorphic with the $E_2$ terms appearing in \Cref{prop:ident-e2,rem:char-0}.

\begin{prop}\label{prop:talgss-ghss}
  Let $k$ be a field and $X,Y\in \EnHk$. The $T$--algebra and Goerss--Hopkins spectral sequences abutting to $\pi_*\EnHk(X,Y)$ are isomorphic from the $E_2$ term on.
\end{prop}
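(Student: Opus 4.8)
The plan is to realize both spectral sequences as the Bousfield--Kan homotopy ($\Tot$) spectral sequence of a single type of object --- the cosimplicial space obtained by mapping a simplicial resolution of $X$ into $Y$ --- and then to reduce the statement to the invariance of this spectral sequence under change of resolution. The crucial simplification in the $Hk$--module setting is that, since $E=Hk$ and we work in $\HkMod$ rather than in spectra, the Adams resolution of the target $Y$ used by Goerss and Hopkins is unnecessary: both spectral sequences arise from mapping a simplicial resolution of the \emph{source} $X$ into the \emph{fibrant} target $Y$ and forming the spectral sequence of the resulting skeletal tower of fibrations. Thus the only difference between the two constructions is the choice of simplicial resolution $P_\bullet \to X$ in simplicial objects of $\EnHk$, and it suffices to show that both resolutions are cofibrant resolutions in the Goerss--Hopkins resolution model structure and that the spectral sequence depends only on the weak equivalence class of such a resolution.

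First I would recall that the Goerss--Hopkins resolution model structure on simplicial $\EnHk$--algebras has weak equivalences detected by $\pi_*$ with its algebraic structure over the extended Dyer--Lashof algebra, and that its cofibrant objects are the $E$--free resolutions, built levelwise from free $\mathcal{E}_n$--algebras on free $Hk$--modules. Here the hypothesis that $k$ is a field is essential: every object of $\HkMod$ is equivalent to a sum of shifts of $Hk$, so the free $\mathcal{E}_n$--algebras $F T^n U X$ occurring in the cotriple resolution $B_\bullet X$ are exactly the free generators used to build Goerss--Hopkins resolutions. I would then verify that $B_\bullet X$ is cofibrant in this model structure: it is levelwise free by construction, its latching maps are the split inclusions of free summands supplied by the comonad structure of the bar construction (so that the resolution is $E$--free in the Reedy sense), and the augmentation $B_\bullet X \to X$ is a weak equivalence because $X$ is resolvable and $U$ preserves geometric realizations, exactly as established in the proof of \Cref{prop:t-alg-ss}. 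Hence $B_\bullet X$ is one cofibrant resolution of $X$ and the Goerss--Hopkins resolution is another.

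With both resolutions identified as cofibrant resolutions of the same object $X$, the comparison follows from the homotopy invariance of the resolution spectral sequence. Any two cofibrant resolutions of $X$ are connected by a weak equivalence lying over $X$; mapping such a weak equivalence into the fibrant target $Y$ and passing to the skeletal filtrations yields a map of towers of fibrations that induces an isomorphism of the associated homotopy spectral sequences from the $E_2$ term onward. This is precisely Bousfield's invariance theorem for homotopy spectral sequences of cosimplicial resolutions \cite[Thm.~6.2]{Bou03}; alternatively it follows from the model--categorical description of the spectral sequence recorded in the footnote to \Cref{prop:t-alg-ss}, since $\EnHk$ and its simplicial objects are combinatorial. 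Because the abstract identification of the two $E_2$ terms as the Quillen cohomology groups of \Cref{prop:ident-e2} (and \Cref{rem:char-0} in characteristic zero) has already been recorded, the content of this last step is that the isomorphism is compatible with the tower structure, and hence with all higher differentials, rather than merely with the $E_2$ page.

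The main obstacle I anticipate is the careful verification that the cotriple resolution is Reedy cofibrant in the resolution model structure --- i.e., that its latching maps are the expected free inclusions --- and, relatedly, ensuring that the comparison is carried out at the level of the skeletal towers so that it produces a genuine isomorphism of spectral sequences rather than only the already--known abstract isomorphism of $E_2$ pages. Once the cofibrancy of $B_\bullet X$ and the invariance statement are in place, the remaining identifications of $E_1^{0,0}$ and $E_2^{0,0}$ with the forgetful edge maps from \Cref{prop:t-alg-ss} match those on the Goerss--Hopkins side automatically.
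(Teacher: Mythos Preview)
Your proposal is correct and follows essentially the same route as the paper: reduce to showing that the bar resolution $B_\bullet X$ is a cofibrant resolution of $X$ in the Goerss--Hopkins resolution model structure, and then invoke the invariance of the spectral sequence under change of cofibrant resolution. The paper's own proof is terser---it simply asserts that this cofibrancy follows from the dual of \cite[Thm.~6.2]{Bou03} (or alternatively from the flat--resolution argument in \Cref{sec:e2-model})---whereas you sketch a direct verification via the latching maps and the fact that over a field every $Hk$--module splits as a sum of shifts of $Hk$; both arrive at the same conclusion for the same reason.
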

\begin{proof}
  It suffices to show that the bar resolution used to construct the $T$-algebra spectral sequence is a resolution in the resolution model structure. This follows from the dual of \cite[Thm.~6.2]{Bou03}, but it will also follow from French's argument in \Cref{sec:e2-model}. 
\end{proof}

Now we recall one of the main results of French's thesis. 
\begin{thm}(\cite[Thm.~2.6.7] {Fre10})\label{thm:french}
  Suppose that $X$ and $Y$ are spaces of $\bF_p$--finite type and $Y$ is $p$--nilpotent. Then there is a natural isomorphism between the unstable Adam spectral sequence 
  \[E_{2,UA}^{s,t}\Longrightarrow \pi_*\Top(X,Y_{\hat{p}})\] 
  and the Goerss--Hopkins spectral sequence 
  \[E_{2,\mathcal{E}_{\infty}}^{s,t}\Longrightarrow \pi_*\EinftyHfpbar(H\fpbar^{Y},H\fpbar^X)\]
  from the $E_2$ page on.
\end{thm}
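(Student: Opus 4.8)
The plan is to extract both spectral sequences from a single (co)simplicial object, transported across the cochain functor $H\fpbar^{(-)}=F(\Sigma^\infty_+(-),H\fpbar)$. On the topological side the unstable Adams spectral sequence is the $\Tot$ spectral sequence of the cosimplicial space $\Top(X,\fp^\bullet Y)$, where $\fp^\bullet Y$ is the Bousfield--Kan cosimplicial resolution of $Y$ by free $\fp$--GEMs, whose totalization is $Y_{\phat}$ precisely under the stated finite type and $p$--nilpotence hypotheses. Applying $H\fpbar^{(-)}$ to $\fp^\bullet Y$ turns this cosimplicial space into a simplicial $\mathcal{E}_\infty$--algebra in $H\fpbar$--modules augmented over $H\fpbar^Y$, and the first goal is to recognize this simplicial object as a resolution adapted to the Goerss--Hopkins machine. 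If so, then mapping it into $H\fpbar^X$ and taking the skeletal/realization filtration reproduces the Goerss--Hopkins tower, and one checks that this tower agrees with the Bousfield--Kan tower computing $\pi_*\Top(X,\fp^\bullet Y)$.

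The key steps, in order, are as follows. First, recall the Bousfield--Kan resolution and the identification $\Tot\fp^\bullet Y\simeq Y_{\phat}$, so that the $\Tot$ spectral sequence of $\Top(X,\fp^\bullet Y)$ is the unstable Adams spectral sequence converging to $\pi_*\Top(X,Y_{\phat})$. Second, identify each $H\fpbar^{\fp^n Y}$ as a free $\mathcal{E}_\infty$--$H\fpbar$--algebra: since $\fp^n Y$ is a product of Eilenberg--Mac Lane spaces, its $\fpbar$--cohomology is the free unstable algebra over the Steenrod operations on the corresponding generators, which matches the homotopy of a free $\mathcal{E}_\infty$--$H\fpbar$--algebra on those generators; the finite type hypothesis makes the relevant cohomology dualizable. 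Third, and this is the crux, show that the augmented simplicial $\mathcal{E}_\infty$--algebra $H\fpbar^{\fp^\bullet Y}\to H\fpbar^Y$ is cofibrant in the resolution model structure --- equivalently, that $\pi_*^{\natural}$ (levelwise $\fpbar$--homology) is a projective simplicial resolution of $\pi_*H\fpbar^Y$ in the algebraic category of $H_\infty$--algebras. This is precisely the dual of \cite[Thm.~6.2]{Bou03}, and it is what legitimizes the cochains of the Bousfield--Kan resolution as input for the Goerss--Hopkins spectral sequence. Fourth, identify the two families of mapping objects: because $H\fpbar^{\fp^n Y}$ is free, $\EinftyHfpbar(H\fpbar^{\fp^n Y},H\fpbar^X)$ is the space of images of the generators in $H\fpbar^X$, which by the second step is naturally comparable to $\Top(X,\fp^n Y)$ via the cochain functor. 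Hence the comparison map of the introduction is filtration preserving, and it induces a map of towers and of spectral sequences.

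Because the recognition in the third step only pins down the cochain resolution up to the weak equivalences of the resolution model structure, the $E_1$--pages are resolution dependent, but the $E_2$--pages and all higher pages --- which are independent of the chosen resolution --- agree; this is the sense in which the isomorphism holds from $E_2$ on. At $E_2$ one must check that both sides compute the same derived functor: the unstable Adams $E_2$ is the unstable $\mathrm{Ext}$/Andr\'e--Quillen cohomology over the Steenrod algebra, while the Goerss--Hopkins $E_2$ is the Andr\'e--Quillen cohomology of $\mathcal{E}_\infty$--algebras as in \Cref{prop:ident-e2}, and both are to be computed by the common free simplicial resolution produced in the second step.

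I expect the main obstacle to be the third step together with the reconciliation of coefficients. One must verify not only that GEM cohomology is free as an $\mathcal{E}_\infty$--algebra, but that the augmented simplicial object is genuinely $\pi^{\natural}$--acyclic away from degree zero, i.e.\ that the Bousfield--Kan cotriple resolution becomes a projective resolution after applying $\fpbar$--homology. More delicate still is that the topological GEMs are governed by $\fp$, whereas the Goerss--Hopkins resolution model structure demands $\fpbar$; the passage to the algebraic closure is not a naive flat base change but is exactly what forces the $\mathcal{E}_\infty$ Andr\'e--Quillen cohomology over $\fpbar$ to coincide with the unstable Steenrod $\mathrm{Ext}$, and it is also what makes the comparison map on abutments an equivalence (a Mandell--type statement). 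Establishing that this comparison map is well defined, filtration preserving, and an isomorphism of $E_2$--pages --- using the finite type of $X$ and $Y$ and the $p$--nilpotence of $Y$ for convergence --- is the technical heart of the argument.
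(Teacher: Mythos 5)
There is a genuine gap, and it sits exactly at what you correctly identify as the crux. Your second and third steps assert that $H\fpbar^{\fp^n Y}$ is a \emph{free} $\mathcal{E}_\infty$--$H\fpbar$--algebra and hence that $H\fpbar^{\fp^{\bullet+1} Y}\to H\fpbar^{Y}$ is \emph{cofibrant} in the resolution model structure. This is false. The $\fpbar$--cohomology of a GEM is the free unstable algebra over the \emph{Steenrod} algebra on its generators, whereas the homotopy of a free $\mathcal{E}_\infty$--$H\fpbar$--algebra is the free allowable algebra over the \emph{extended Dyer--Lashof} algebra; these do not match, because in the free $\mathcal{E}_\infty$--algebra the operation $P^0$ acts as a Frobenius--semilinear endomorphism rather than as the identity. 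Mandell's computation identifies $H\fpbar^{K(\bZ/p,n)}$ as the cofiber of $P^0-1$ on a free algebra --- a two--cell object, not a free one --- and this is precisely why the whole comparison is delicate. Consequently the dual of \cite[Thm.~6.2]{Bou03} does not apply to the cobar resolution (it applies to the bar resolution of the $T$--algebra spectral sequence, whose levels genuinely are free), and your fourth step, which computes the mapping spaces by evaluating on generators of a free algebra, collapses as well.

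The correct substitute is weaker and requires two separate inputs that your outline does not supply. First, one proves only that each level $H\fpbar^{\fp^n Y}$ has \emph{vanishing higher derived functors} (\Cref{thm:vanishing}); this uses Mandell's explicit two--cell resolution together with the Artin--Schreier fact that $x\mapsto x^p-x$ is surjective on $\fpbar$--modules --- which is the real reason the algebraic closure enters, as you half--suspected. Second, one needs a comparison theorem (\Cref{thm:flat-resolutions}) asserting that a non--cofibrant ``flat'' resolution with levelwise vanishing higher derived functors still computes the Goerss--Hopkins spectral sequence from $E_2$ on; this is proved by a bicosimplicial biresolution $P_{\bullet}\widetilde R_{\bullet}$ interpolating between the cobar resolution and a genuine cofibrant resolution, with the totalization spectral sequence of the double complex collapsing onto the zeroth column. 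Without this second ingredient the identification of the two $E_2$--pages does not follow, since the Goerss--Hopkins $E_2$ is defined via cofibrant resolutions and the cochains of the Bousfield--Kan tower are not one.
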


The two $E_2$ pages of these spectral sequences are ostensibly different and their identification depends on a crucial result of Mandell. In particular 
\[E_{2,\mathit{UA}}^{0,0}\cong \mathit{UA}(H^*(Y;\fp), H^*(X;\fp)),\] 
the maps of unstable algebras over the Steenrod algebra over $\fp$, while
\[E_{2,\mathcal{E}_\infty}^{0,0}\cong H_{\infty}(H\fpbar^{X},H\fpbar^{Y})\cong
\mathit{EUA}(H^*(Y;\fpbar), H^*(X;\fpbar))\] 
consists of the maps of unstable algebras over the \emph{extended Dyer--Lashof} algebra over $\fpbar$. The remainder of the $E_2$ term can be identified with the corresponding Quillen cohomology groups in these respective categories. These groups are also sometimes called non-abelian $\ext$ groups \cite{Mil84}\cite[\S 9]{Bou89}). Although we defer to \cite{Man01,Fre09,Fre10} for the details of this identification, we will recall a variation of French's argument for comparing these spectral sequences in \Cref{sec:e2-model} that is general enough to also prove:

\begin{thm}\label{thm:rational-french}
  Suppose that $X$ and $Y$ are spaces of $\bQ$--finite type and $Y$ is $\bQ$--nilpotent. Then there is a natural isomorphism between the unstable Adam spectral sequence converging to $\pi_*\Top(X,Y_{\bQ})$ and the Goerss--Hopkins spectral sequence converging to $\pi_*\EinftyHQ(H\bQ^{Y},H\bQ^X)$.
\end{thm}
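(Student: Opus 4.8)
The plan is to run the variation of French's argument that will be set up in \Cref{sec:e2-model} in enough generality to cover both completions at once, and then simply substitute the classical rational analogue of Mandell's theorem for Mandell's theorem itself. Applying the $\mathcal{E}_\infty$--$H\bQ$--cochain functor $C^*(-;\bQ)$ to a map $X\to Y_{\bQ}$ produces the comparison map
\[\Top(X,Y_{\bQ})\rightarrow \EinftyHQ(H\bQ^{Y},H\bQ^{X}),\]
and, exactly as in the proof of \Cref{thm:french}, the whole problem reduces to showing that this functor carries the Bousfield--Kan (injective) cosimplicial resolution of $Y_{\bQ}$ underlying the unstable Adams spectral sequence to a resolution in the simplicial resolution model structure on $\mathcal{E}_\infty$--$H\bQ$--algebras that computes the Goerss--Hopkins spectral sequence for $H\bQ^{Y}$.

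First I would note that both spectral sequences arise by applying $\pi_*$ to the tower of fibrations associated to such a resolution, so it is enough to produce a natural contravariant map of resolutions inducing an isomorphism on associated graded pieces. The generating injectives for the rational unstable Adams resolution are products of rational Eilenberg--MacLane spaces $K(\bQ,n)$, and the decisive computation is that
\[C^*(K(\bQ,n);\bQ)\]
is the free graded commutative $\bQ$--algebra on a single generator in degree $n$. By \Cref{rem:char-0} this is precisely the free $\mathcal{E}_\infty$--$H\bQ$--algebra on one generator, i.e.\ a cofibrant generator for the target resolution model structure, so $C^*(-;\bQ)$ sends the building blocks of one resolution to those of the other, turning the cosimplicial direction into the simplicial one.

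Next I would identify the $E_2$ pages. By \Cref{rem:char-0} the $H_\infty$--algebras over $\bQ$ are exactly the graded commutative $\bQ$--algebras; rationally the Steenrod operations vanish and the instability condition is automatic in characteristic zero, so the unstable algebras over the Steenrod algebra appearing in $E_{2,UA}^{0,0}$ are again graded commutative $\bQ$--algebras, and under $H^*(-;\bQ)$ the two descriptions of $E_2^{0,0}$ coincide. The remaining entries are the Andr\'e--Quillen cohomology groups of $H^*(X;\bQ)$ over $H^*(Y;\bQ)$ on both sides, so the identification extends over the entire page and, being induced by a map of towers, is compatible with all differentials.

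The hard part will be the same bookkeeping that occupies French in the $p$--adic case: verifying that $C^*(-;\bQ)$ genuinely intertwines the two resolution model structures, rather than merely matching the two $E_2$ pages abstractly. The essential simplification relative to \Cref{thm:french} is that the input replacing Mandell's theorem---the equivalence of Quillen, Sullivan, and Bousfield--Gugenheim between the rational homotopy theory of nilpotent spaces of finite type and commutative differential graded $\bQ$--algebras---already holds over $\bQ$ itself. Consequently no passage to an algebraic closure is needed (in contrast to the role of $\fpbar$ in \Cref{thm:french}), and for $X$ and $Y$ as in the hypotheses the comparison map displayed above is itself a weak equivalence, which pins down the common abutment.
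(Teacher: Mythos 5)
Your overall route is the same as the paper's: run French's comparison with the classical rational input (freeness of $H^*(K(\bQ,n);\bQ)$, and Quillen--Sullivan rational homotopy theory in place of Mandell's theorem) and observe that no passage to an algebraic closure is needed. You have also correctly isolated the decisive computation, which is exactly what the paper proves as \Cref{thm:vanishing}.\ref{it:rational-case}: the terms of the image of the Bousfield--Kan cobar resolution under $H\bQ^{(-)}$ have free cohomology rings, hence vanishing higher Andr\'e--Quillen cohomology.

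There is, however, a genuine gap at the step where you conclude that because each $C^*(K(\bQ,n);\bQ)$ is (equivalent to) a free $\mathcal{E}_\infty$--$H\bQ$--algebra, the functor ``sends the building blocks of one resolution to those of the other.'' Levelwise freeness of the terms does \emph{not} make the resulting simplicial object cofibrant in the resolution model structure: cofibrancy there is a condition on latching maps, i.e.\ on the degeneracy diagram, and the dualized cobar complex does not come with the free degeneracy structure that a genuine Goerss--Hopkins resolution has. The paper emphasizes precisely this point (``We are interested in the case when $\widetilde{R}_{\bul}$ is \emph{not} cofibrant in the resolution model structure''), and the missing mechanism is \Cref{thm:flat-resolutions}: a non-abelian analogue of ``flat resolutions compute $\Tor$,'' proved by building a bisimplicial biresolution $P_{\bul}\widetilde{R}_{\bul}$, diagonalizing, and running a totalization spectral sequence that collapses onto the $0$--column exactly because the terms have no higher derived functors. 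You flag ``the hard part'' but the route you sketch for it (matching cofibrant generators) would not close it; what is needed is this comparison theorem for levelwise-acyclic but non-cofibrant resolutions. A minor additional slip: on the Goerss--Hopkins side the $E_2$ term consists of Quillen cohomology groups of $H^*(Y;\bQ)$ (the homotopy of the source $H\bQ^{Y}$ being resolved) with coefficients in modules over $H^*(X;\bQ)$, not the other way around as written.
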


Combining \Cref{prop:talgss-ghss} with \Cref{thm:french,thm:rational-french} we see that the unstable Adams spectral sequences can be recovered as special cases of the $T$--algebra spectral sequence. Since the latter has been studied off and on for the last 40 years, we have many powerful results that we can apply to analyze this filtration on $\mathcal{E}_{\infty}$--maps (see \Cref{sec:earlier} for some examples).

\section{Applications}\label{sec:examples}
\subsection{Hirzebruch Genera}\label{sec:genera}
Recall that $BGL_1S$ is the classifying space for stable spherical fibrations (see, for example, \cite{May77} where this is called $BF$). This is an infinite loop space and associated to any infinite loop map $G\rightarrow GL_1 S$ is an $\mathcal{E}_{\infty}$ Thom spectrum $MG$.  Standard examples include the $J$--homomorphisms from $\mathit{SO}$, $\mathit{Spin}$, $\mathit{String}$ and their complex analogues. The homotopy groups of these Thom spectra correspond to the bordism rings of the corresponding categories of manifolds.
 
Now a Hirzebruch genus is a graded commutative ring map
\[\overline{\phi}\colon \pi_* MG\rightarrow R_*\] 
whose target is a graded commutative $\bQ$--algebra (compare \cite[\S 19]{MiS74}). In geometric examples a genus is a function which assigns to each manifold an element in a $\mathbb{Q}$--module which is a cobordism invariant, takes disjoint unions to sums, and takes finite cartesian products to ordinary products.

\begin{thm}\label{thm:genera}
  There exists an $\mathcal{E}_{\infty}$--ring spectrum $R$ such that $\pi_* R\cong R_*$ and
  \[\pi_0 \mathcal{E}_{\infty}(MG,R)\cong \Comm(\pi_* MG, R_*).\]
  In other words, every Hirzebruch genus $\overline{\phi}$ can be lifted to a map of $\mathcal{E}_\infty$--ring spectra and this lift is unique up to homotopy through $\mathcal{E}_{\infty}$--ring maps.
\end{thm}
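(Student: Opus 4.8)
The plan is to work rationally throughout and to realize the desired bijection as the edge homomorphism of the $T$--algebra spectral sequence of \Cref{prop:t-alg-ss}, after checking that all of its relevant higher terms vanish. First I would construct $R$. Since $R_*$ is a graded commutative $\bQ$--algebra, \Cref{rem:char-0} identifies the homotopy theory of $\mathcal{E}_{\infty}$--algebras in $H\bQ$--modules with that of commutative differential graded $\bQ$--algebras; taking $R_*$ to be a formal CDGA (trivial differential) produces an $\mathcal{E}_{\infty}$--ring spectrum $R$, which is an $H\bQ$--algebra, with $\pi_* R\cong R_*$.

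Next I would reduce the computation to $H\bQ$--modules. Because $R$ is an $H\bQ$--algebra and the rationalization of the sphere satisfies $S_{\bQ}\simeq H\bQ$, base change along $S\to H\bQ$ gives a natural equivalence
\[\mathcal{E}_{\infty}(MG,R)\simeq \mathcal{E}_{\infty}^{H\bQ}(H\bQ\wedge MG,\,R),\]
so it suffices to analyze the right-hand side, and I set $X=H\bQ\wedge MG\simeq MG_{\bQ}$. The key algebraic input is that $\pi_* X$ is a \emph{free} graded commutative $\bQ$--algebra: since $S_{\bQ}\simeq H\bQ$ the Hurewicz map identifies $\pi_* X$ with $\pi_* MG\otimes\bQ$ as graded commutative rings, and the rational Thom isomorphism identifies the latter with $H_*(BG;\bQ)$; as $BG$ is an infinite loop space, $H_*(BG;\bQ)$ is a connected graded commutative, cocommutative Hopf algebra over $\bQ$ and is therefore free graded commutative by the structure theorem for Hopf algebras over a field of characteristic zero.

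Now I would run the spectral sequence of \Cref{prop:t-alg-ss} for $\mathcal{E}_{\infty}^{H\bQ}(X,R)$. By \Cref{rem:char-0} there are no Dyer--Lashof operations, the ambient category of $H_{\infty}$--algebras is simply graded commutative $\bQ$--algebras, and the $E_2^{0,0}$ identification of \Cref{prop:t-alg-ss} combined with \Cref{rem:char-0} reads
\[E_2^{0,0}\cong \Comm(\pi_* X, R_*)\cong \Comm(\pi_* MG\otimes\bQ,\,R_*)\cong \Comm(\pi_* MG, R_*),\]
the last isomorphism holding because $R_*$ is a $\bQ$--algebra. By \Cref{prop:ident-e2} together with \Cref{rem:char-0}, for $t>0$ the term $E_2^{s,t}$ is the Andr\'e--Quillen cohomology group $HQ^s(\pi_* X;\pi_* R^{S^t})$, and this vanishes for every $s\geq 1$ because a free graded commutative $\bQ$--algebra is formally smooth and hence has trivial higher Andr\'e--Quillen cohomology.

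It remains to assemble the conclusion. The obstructions to a base point in the totalization lie in $E_2^{s,s-1}$ with $s\geq 2$; these have $t=s-1>0$ and so vanish by the previous paragraph, whence $\pi_0\,\mathcal{E}_{\infty}^{H\bQ}(X,R)$ is computed by the spectral sequence. Only the diagonal subquotients $E_{\infty}^{s,s}$ contribute to $\pi_0$, and for $s\geq 1$ these are subquotients of $E_2^{s,s}=HQ^s(\pi_* X;\pi_* R^{S^s})=0$; likewise every differential entering or leaving $E^{0,0}$ lands in a group $HQ^{\geq 1}(\pi_* X;-)=0$, so $E_{\infty}^{0,0}\cong E_2^{0,0}$. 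The edge homomorphism therefore yields
\[\pi_0\,\mathcal{E}_{\infty}(MG,R)\cong E_{\infty}^{0,0}\cong \Comm(\pi_* MG, R_*),\]
which is exactly the asserted existence and uniqueness of $\mathcal{E}_{\infty}$ lifts. The main obstacle is establishing the freeness of $\pi_* MG\otimes\bQ$; granting that, the vanishing of the higher Andr\'e--Quillen cohomology and the resulting collapse of the spectral sequence are formal.
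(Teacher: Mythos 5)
Your argument follows the paper's proof almost step for step (rationalize, identify the $E_2$ term with Andr\'e--Quillen cohomology, use the Milnor--Moore/Leray structure theorem to show $\pi_*MG_{\bQ}$ is free graded commutative, conclude collapse and an edge-homomorphism isomorphism), but it has one genuine gap: you invoke ``the rational Thom isomorphism'' $\pi_*MG\otimes\bQ\cong H_*(BG;\bQ)$ unconditionally. A Thom spectrum $MG$ admits an $H\bQ$--Thom class if and only if the classifying map $G\rightarrow GL_1 S$ lifts to the connected cover $SL_1 S$ (equivalently, $MG$ is $\bZ$--orientable); when it does not lift, the Thom isomorphism fails. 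The spectrum $MO$ is the standard counterexample: $\pi_*MO\otimes\bQ=0$ while $H_*(BO;\bQ)$ is a polynomial algebra on the Pontryagin classes, so your ``key algebraic input'' is simply false there, and in particular $\pi_*MG\otimes\bQ$ is not a free graded commutative $\bQ$--algebra in that case.

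The paper closes this gap with a dichotomy: if the classifying map lifts to $SL_1 S$, the Thom isomorphism holds and your argument goes through; if it does not, then $\pi_0 MG\cong\bZ/2$, so $\pi_*MG$ is a $\bZ/2$--algebra, there are no ring maps to a $\bQ$--algebra, both sides of the asserted bijection are empty, and the theorem holds vacuously. You need to add this case analysis (or restrict the statement to orientable $MG$) for the proof to be complete. The rest of your argument --- the construction of $R$ as a formal CDGA, the base change to $H\bQ$--modules, the identification of $E_2^{0,0}$ and of $E_2^{s,t}$ for $t>0$ with Andr\'e--Quillen cohomology, the vanishing of the higher groups for a free algebra, and the handling of the base-point obstructions in $E_2^{s,s-1}$ --- is sound and matches the paper.
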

\begin{proof}
First note that we can apply the Eilenberg--MacLane functor to $R_*$ to obtain an $\mathcal{E}_{\infty}$--ring spectrum $HR$ with $\pi_*HR\cong R_*$. This is an arbitrary choice, but it will turn out the conclusion of the theorem does not depend on this choice.

Now any such genus factors 
\[\pi_* MG\rightarrow \pi_*MG\otimes \bQ\cong H_*(MG;\bQ)\rightarrow R_**\] 
and similarly there is a canonical weak equivalence of derived mapping spaces
\[\mathcal{E}_{\infty}(MG_{\bQ},HR)\xrightarrow{\sim} \mathcal{E}_{\infty}(MG,HR).\] 
Here $MG_{\bQ}$ is the Bousfield localization of $MG$ with respect to $H\bQ$.

We will now apply one of the above equivalent spectral sequences to compute
\[\pi_* \mathcal{E}_{\infty}(MG_{\bQ}, HR)\cong \pi_*\EinftyHQ(MG_{\bQ}, HR)\] 
and show that the obstructions to lifting $\overline{\phi}$ vanish.  Since the resulting spectra are rational we can use \cite[\S 5]{JoN13} to identify the $E_2$ term as follows:
\begin{align*}
   E_2^{0,0} &\cong  \Comm_{\bQ}(\pi_*MG_{\bQ},R_*)\cong
   \Comm(\pi_* MG,R_*)\\
   E_2^{s,t} & \cong H_{AQ,R_*}^{s}(\pi_*MG_{\bQ};\pi_*HR^{S^t}) \quad \text{ for } t>0.
\end{align*}
 
If $MG$ is $\bQ$--oriented, then we have a Thom isomorphism 
\[H_{*}(MG;\bQ)\cong H_{*}(BG;\bQ)\cong \pi_* MG_{\bQ}\] 
of graded-commutative $\bQ$--algebras. Now $H_{*}(BG;\bQ)$ is a connected bicommutative Hopf algebra over $\bQ$ and by the Milnor--Moore theorem \cite[\S 7]{MiM65} such Hopf algebras have free underlying graded commutative algebras.

So if $MG$ is $\bQ$--oriented, $\pi_*MG\otimes \bQ\cong \pi_* MG_{\bQ}$ is a free, and hence formal, $\bQ$--algebra and by \cite[Thm.~2.4.(ii)]{Qui70} the positive Andr\'e--Quillen cohomology groups for $\pi_* MG_{\bQ}$ vanish with any coefficients. By the identification of the $E_2=E_\infty$ page, the Hurewicz map \[\pi_0 \mathcal{E}_{\infty}(MG_{\bQ},HR)\rightarrow \Comm_{\bQ}(\pi_*MG\otimes \bQ,R)\cong \Comm(\pi_*MG,R)\]
is an isomorphism, so $\phi$ can be lifted uniquely up to homotopy. Moreover in this case there is a bijection between homotopy classes of $\mathcal{E}_{\infty}$--maps and $H_{\infty}$--ring maps.

Now by \cite[Prop.~IX.4.5]{LMS86}, Thom spectra are $\bQ$--oriented if and only if they are $\mathbb{Z}$--oriented, which happens if and only if the classifying map $G\rightarrow GL_1 S$ lifts to the connected cover $SL_1 S$. Moreover, if the map does not lift, then $\pi_0 MG$ is $\mathbb{Z}/2$, which implies $\pi_* MG$ is a $\mathbb{Z}/2$--algebra. In this case there are no maps to a $\bQ$--algebra, so the theorem vacuously holds.
\end{proof}

\begin{remark}
  Although the proof of \Cref{thm:genera} was not entirely formal, the only non-formal input was classical. The author expects that this result was surely known to experts by alternative methods which avoid the above spectral sequence arguments. In particular, the classical part of the argument above shows that $MG_\mathbb{Q}$ corresponds to a formal graded commutative $\bQ$--algebra via the Quillen equivalence between rational $\mathcal{E}_\infty$--ring spectra and graded commutative $\bQ$--algebras. It follows immediately that any such Hirzebruch genus can be lifted to any $\mathcal{E}_\infty$--map uniquely up to homotopy.

  \Cref{thm:genera} says that the problem of lifting genera to $\mathcal{E}_{\infty}$--maps is easy \emph{rationally}. In important cases, geometric arguments show that the Hirzebruch genus lifts to a non-rational ring. Integrally or even $p$--locally this problem is significantly harder and related to deeper mathematical questions (see, for example, \cite{AHR06}).
\end{remark}

\subsection{Examples from earlier work}\label{sec:earlier}
The equivalence between the unstable Adams spectral sequence and the $T$--algebra spectral sequence sheds light on \cite[Ex.~5.11, Ex.~5.14]{JoN13} where the $T$--algebra spectral sequence is used to calculate $\pi_* \mathcal{E}_{\infty}(H\bQ^{S^2},H\bQ^{S^3})$ and $\pi_* \mathcal{E}_{\infty}(H\bQ^{K(G,1)},H\bQ^{S^1})$. Here $G$ is the Heisenberg group. We can now identify these examples as calculations of $\pi_* \Top(S^3, S^2_{\bQ})$ and $\pi_* \Top(S^2, K(G,1)_{\bQ})$ respectively.

By evaluating at a base point of the sphere, we see that each of these spaces sits in a split fibration.  For example we have 
\[\Omega^2 K(G,1)_{\bQ}\rightarrow \Top(S^2, K(G,1)_{\bQ})\xrightarrow{ev} K(G,1)_{\bQ}.\]
These splittings decompose the spectral sequence into a product of spectral sequences computing the homotopy groups of \emph{based} spaces.

The importance of the first example was that it gave examples of non-trivial elements detected in positive filtration and hence were undetectable in the category of  $H_{\infty}$ ring spectra. These elements we can now identify as rational multiples of the Hopf map $S^3\rightarrow S^2$ and by convergence we see that these are permanent cycles. Moreover we can now bypass the direct computation in \cite{JoN13} to see that these elements must be in positive degree. Namely by \cite{BoK73} we know that Whitehead products always raise filtration degree and the Hopf map can be identified with the Whitehead product $[\mathrm{Id}_{S^2},\mathrm{Id}_{S^2}]$ up to a rational unit.

The second computation involving the Heisenberg group was important because it gave an example of a non-trivial differential (i.e., an obstruction). Now that we can see that we are computing the space of maps from $S^2$ into $K(G,1)$, we see that although the $E_2$ term is quite large, nearly all groups must be annihilated in the spectral sequence. The only non-base point elements to survive are in $E_2^{0,1}=\bQ\oplus \bQ$ and $E_2^{1,2}=\bQ$. These groups fit into the defining (non-split) exact sequence for the rational Heisenberg group. Again we can identify $E_2^{1,2}$ as generated by a Whitehead product, which in this instance is just the commutator of the two generators in $E_2^{0,1}$.

Finally we note that the differential constructed in \cite{JoN13} arises from the existence of a non-trivial Massey product in $H^*(K(G,1);\bQ)$. Using \cite{GuM74} or general staircase lemma arguments, one expects this Massey product to induce a differential in the Eilenberg--Moore spectral sequence converging to $H^*(\Omega K(G,1);\bQ)\cong \bQ[G]$. This forces a differential in the original spectral sequence by the results of \cite[\S 15]{BoK73} which show that the $E_2$ term of the rational unstable Adams spectral sequence \emph{embeds} (with a shift) into the $E_2$ term of the Eilenberg--Moore spectral sequence.

\subsection{$K(1)$--local examples}\label{sec:k-1}
In \Cref{thm:genera} we lifted genera to $\mathcal{E}_\infty$ maps using a simple trick: Although the spectrum $MG$ is not weakly equivalent to a free $\mathcal{E}_{\infty}$--ring spectrum, it is after rational localization. It follows that one can completely describe the space of $\mathcal{E}_{\infty}$--maps from $MG$ to any rational $\mathcal{E}_{\infty}$--ring spectrum.

A similar argument was used in \cite[Ex.~5.4]{JoN13} to analyze the space of $\mathcal{E}_{\infty}$ maps out of $\Sigma^{\infty}_{+}\Coker J$ to any $K(2)$--local $\mathcal{E}_{\infty}$--ring spectrum. That example depended both on \cite[Thm.~2.21]{Kuh06}, which gave criteria for suspension spectra to be $K(n)$--locally free, and joint work of the author with Nick Kuhn which showed that $\Coker J$ satisfies the required hypotheses when $n$ is $2$. When $n$ is $1$ we have the following:

\begin{prop}\label{prop:k-1}
  Suppose $X$ is a simply connected spectrum, $\pi_2 X$ is a finite torsion group, and $R$ is a $K(1)$--local $\mathcal{E}_{\infty}$--ring spectrum.
  Then the $T$--algebra spectral sequence converging to 
  \[ \pi_* \mathcal{E}_{\infty}(\Sigma^{\infty}_+ \Omega^{\infty} X, R) \] 
  collapses onto the 0--line. Moreover 
  \[\pi_* \mathcal{E}_{\infty}(\Sigma^{\infty}_+ \Omega^{\infty} X, R)\cong R^{-*}(X)\]
  and each $H_{\infty}$ map can be lifted uniquely, up to homotopy, to an $\mathcal{E}_{\infty}$--map.
\end{prop}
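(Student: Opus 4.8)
The plan is to reduce the proposition to the single statement that $\Sigma^{\infty}_+\Omega^{\infty}X$ is \emph{free} as an $\mathcal{E}_{\infty}$--ring after $K(1)$--localization, in exact parallel with the proof of \Cref{thm:genera}, where rational localization turned $MG$ into a free (formal) algebra, and with the $K(2)$--local treatment of $\Sigma^{\infty}_{+}\Coker J$ in \cite[Ex.~5.4]{JoN13}. The essential external input is \cite[Thm.~2.21]{Kuh06}, which under appropriate connectivity and finiteness hypotheses identifies $L_{K(1)}\Sigma^{\infty}_+\Omega^{\infty}X$ with the $K(1)$--localization $L_{K(1)}\mathbb{P}(X)$ of the free $\mathcal{E}_{\infty}$--ring on $X$. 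First I would check that ``$X$ simply connected'' together with ``$\pi_2 X$ a finite torsion group'' is precisely the instance of Kuhn's hypotheses at height $n=1$; I expect the finiteness/torsion condition on $\pi_2$ to be exactly what places $X$ in the range of his theorem, since a free summand in $\pi_2 X$ would contribute a rationally visible but $K(1)$--acyclic bottom class that could obstruct the splitting.

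Granting this, the remaining steps are formal. Because $R$ is $K(1)$--local, every $\mathcal{E}_{\infty}$--map out of $\Sigma^{\infty}_+\Omega^{\infty}X$ factors through the localization, so
\[\mathcal{E}_{\infty}(\Sigma^{\infty}_+\Omega^{\infty}X,R)\simeq\mathcal{E}_{\infty}(L_{K(1)}\mathbb{P}(X),R)\simeq\mathcal{E}_{\infty}(\mathbb{P}(X),R).\]
The free--forgetful adjunction identifies the right-hand side with the space of underlying spectrum maps $\Omega^{\infty}F(X,R)$, and therefore
\[\pi_*\mathcal{E}_{\infty}(\Sigma^{\infty}_+\Omega^{\infty}X,R)\cong\pi_*F(X,R)\cong R^{-*}(X),\]
the final identification using once more that $R$ is $K(1)$--local, so that it cannot distinguish $X$ from $L_{K(1)}X$. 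This yields the asserted computation of the homotopy groups.

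For the collapse and the lifting statement I would work inside the category of $K(1)$--local $\mathcal{E}_{\infty}$--rings, in which, by the previous step, the source becomes free. Because the source is free, its cotriple resolution $B_{\bullet}X\to X$ acquires an extra degeneracy and is a simplicial homotopy equivalence; mapping into $R$ then produces a cosimplicial space with an extra codegeneracy, whose Bousfield--Kan spectral sequence is concentrated on the $0$--line, so that $E_2^{0,*}=E_{\infty}^{0,*}$ and $E_2^{s,t}=0$ for $s>0$. (This is the same vanishing that makes the positive Andr\'e--Quillen cohomology of a free algebra vanish in \Cref{thm:genera}.) With the spectral sequence concentrated on the $0$--line there is no higher filtration, so the edge homomorphism
\[\pi_0\mathcal{E}_{\infty}(\Sigma^{\infty}_+\Omega^{\infty}X,R)\longrightarrow E_2^{0,0}\]
is an isomorphism; by the description of the edge map in \Cref{prop:t-alg-ss} (and \cite{JoN13}) this is exactly the forgetful passage to $H_{\infty}$--maps. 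Surjectivity gives the existence of an $\mathcal{E}_{\infty}$ lift for each $H_{\infty}$--map, and injectivity gives its uniqueness up to homotopy.

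The main obstacle is the first step. The formal consequences---the adjunction computation and the extra--degeneracy collapse of the cotriple spectral sequence for a free algebra---are routine once freeness is in hand, so the entire weight of the argument rests on verifying that $X$ meets the hypotheses of \cite[Thm.~2.21]{Kuh06} at height one, and hence that $\Sigma^{\infty}_+\Omega^{\infty}X$ is $K(1)$--locally free.
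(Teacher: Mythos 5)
Your proposal follows exactly the route of the paper: the entire proof in the paper is the observation that the hypotheses on $X$ place $\Omega^{\infty}X$ in the range of \cite[Thm.~2.21]{Kuh06}, so that $\Sigma^{\infty}_+\Omega^{\infty}X$ is $K(1)$--equivalent to the free $\mathcal{E}_{\infty}$--algebra on $X$, with the remaining formal consequences (adjunction, collapse via the extra degeneracy of the bar resolution of a free algebra) deferred to \cite[Ex.~5.4]{JoN13}. You have correctly identified that the whole weight rests on Kuhn's theorem and have spelled out the formal steps the paper leaves implicit.
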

\begin{proof}
  By hypothesis $\Omega^{\infty}X$ satisfies the hypotheses of \cite[Thm.~2.21]{Kuh06}, so $\Sigma^{\infty}_+ \Omega^{\infty}X$ is $K(1)$--equivalent to the free $\mathcal{E}_{\infty}$--algebra on $X$. See \cite[Ex.~5.4]{JoN13} for further details.
\end{proof}

\subsection{Eilenberg--MacLane spaces and Bockstein spectral sequences}
Now let us use the unstable Adams spectral sequence, as a special case of the above spectral sequences to compute the homotopy groups of 
\[K(\bZ_{\widehat{p}},n)\simeq K(\bZ,n)_{\widehat{p}}\simeq \Top(*,K(\bZ,n)_{\widehat{p}})\simeq \EinftyHfpbar(H\fpbar^{K(\bZ,n)},H\fpbar) \]
for $n>0$. To base this spectral sequence we can take any base point of $K(\bZ,n)$. The calculation of this particular $E_2$ term is easy and well-known to experts, but since the author could not find it in the literature we include a brief sketch. To reduce clutter, unless otherwise stated, we will implicitly take cohomology with $\fp$--coefficients.

Now 
\[E_{2,\mathit{UA}}^{0,0}\cong \mathit{UA}(H^*(K(\bZ,n)),\fp)=\{\ast\}\]
which is the image of our base point. For positive $t$ we have
\[E_{2,\mathit{UA}}^{s,t}\cong H_{\mathit{UA},\bF_p}^{s}(H^*(K(\bZ,n)); H^*(S^t)).\]
While one could construct an explicit resolution of $H^*(K(\bZ,n))$ in the category of simplicial unstable algebras over the Steenrod algebra following the method of \cite[Ex.~5.11]{JoN13}, we will instead use the composite functor spectral sequence of \cite[Thm.~2.5]{Mil84} which converges to our desired $E_2$ term. Since we will only use this spectral sequence here and it collapses at $E_2$ we will just say that the two derived functors being used are those arising from the derived indecomposables of $H^*(K(\bZ,n))$, as an $\fp$--algebra, and the higher $\Ext$ groups in the category of unstable \emph{modules} over the Steenrod algebra (see \cite{Sch94} for a detailed study of this category).

Now by classical computations of Cartan and Serre, $H^*(K(\bZ,n))$ is a free graded commutative commutative algebra on a module $Q$. Here $Q$ is the free module on those admissible monomials over the Steenrod algebra acting on an element in degree $n$, of excess no greater than $n$, modulo those admissible monomials beginning with a Bockstein. This is $\beta$ for odd primes and $\mathit{Sq}^1$ at the prime $2$. This implies the higher Andr\'e--Quillen homology groups vanish and the composite functor spectral sequence collapses to yield 
\[ E_{2,UA}^{s,t}\cong \Ext_{UM}^{s}(\Sigma^{-1}Q, \Sigma^{t-1}\fp).\]

These $\Ext$ groups are defined with respect to a cohomological grading, so we will temporarily switch our grading convention. Let $\sF(i)$ denote the free unstable module on a generator of degree $i$ \cite{Sch94}. Now to calculate these $\Ext$ groups we will construct the following `periodic' resolution of $\Sigma^{-1} Q$:
\[ \cdots \xrightarrow{\beta} \sF(n+1)\xrightarrow{\beta} \sF(n)\xrightarrow{\beta}
\sF(n-1)\xrightarrow{i} \Sigma^{-1} Q\] 
where $\beta$ is the first Bockstein operation and $i$ is the canonical map hitting the desuspended fundamental class. It is easy to see that the relation $\beta^2=0$ and the definition of $\sF(i)$ imply this sequence is exact.

This immediately gives us the $E_2$ term shown in \Cref{fig:kzn}. Clearly the spectral sequence collapses and by convergence, every possible group extension is non-trivial.

\begin{figure}
  \begin{center}
  \pgfimage{kzn}\quad\quad
  \pgfimage{kzn-2}
  \end{center}
  \caption{\label{fig:kzn}
    $E_2=E_{\infty}$ pages for
    $\pi_*\EinftyHfpbar(H\fpbar^{K(\bZ,n)},H\fpbar)$
    and 
    $\pi_*\EinftyHfpbar(H\fpbar^{K(\bZ,n)},H\fpbar^{S^n})$ respectively.}
\end{figure}

To determine the behavior of the spectral sequences above converging to $\pi_*K(\bZ/p^k,n)$, we use the fibration sequence
\[K(\bZ/p^k, n)\rightarrow K(\bZ,n+1)\xrightarrow{p^k} K(\bZ,n+1) \]
and the corresponding Eilenberg--Moore spectral sequence:
\begin{equation*}
\begin{split} \Tor^{s,t}_{H^*(K(\bZ,n+1))}(H^*(K(\bZ,n+1)), \bF_p)\cong \Tor^{s,t}_{H^*(K(\bZ,n+1))}(\fp, \fp)\otimes H^*(K(\bZ,n+1))\\ \Longrightarrow H^{t-s}(K(\bZ/p^k,n)).
\end{split}\end{equation*}
Comparing with the classical computations of Serre and Cartan we see that, for all $k\geq 1$, this spectral sequence collapses.

If we assign the indecomposable generators of $H^*(K(\bZ,n))$ filtration degree 1 and the other generators degree 0, then the $E_2=E_{\infty}$ term of the Eilenberg--Moore spectral sequence is isomorphic to the associated graded of $H^*(K(\bZ,n))\otimes H^*(K(\bZ,n+1))$. Since these algebras are all free as graded commutative $\fp$--algebras there are no additive or multiplicative extensions. However when $k=1$ the Bockstein connects the fundamental classes of these two algebras yielding a free unstable algebra over the Steenrod algebra. In general these algebras will be connected by a $k$th order Bockstein.

So for $k>1$ we see that \[H^*(K(\bZ/p^k,n))\cong H^*(K(\bZ,n))\otimes H^*(K(\bZ,n+1))\] as unstable algebras over the Steenrod algebra. Andr\'e--Quillen cohomology takes coproducts of algebras over the Steenrod algebra, which are represented by tensor products, to products. It follows that the $E_2$ term of the spectral sequence computing $\pi_*K (\bZ/p^k,n)$ is concentrated in the $t-s=n$ and $n+1$ columns. Convergence of this spectral sequence forces the pattern of differentials in \Cref{fig:kzmodp}:
\begin{figure}
\begin{center}
  \pgfimage{kzmodpdiff}\quad\quad
  \pgfimage{kzmodpdiff-2}
  \end{center}
  \caption{\label{fig:kzmodp}For $k>1$, the
    $E_{k}$/$E_{\infty}$ pages for
    $\pi_*\EinftyHfpbar(H\fpbar^{K(\bZ/p^k,n)},H\fpbar)$ and
    $\pi_*\EinftyHfpbar(H\fpbar^{K(\bZ/p^k,n)},H\fpbar^{S^{n+1}})$
    respectively.}
\end{figure}

Just as for the examples in \Cref{sec:earlier}, we can `loop' these examples down to see that the $E_{\infty}^{s,s}$ terms in the spectral sequence computing $\pi_* \EinftyHfpbar(H\fpbar^{K(\bZ/p^k,n)},H\fpbar^{S^n})$ are each non-trivial for $0\leq s<k$. While in the spectral sequence computing $\pi_* \EinftyHfpbar(H\fpbar^{K(\bZ,n)},H\fpbar^{S^n})$, we see that $E_\infty^{s,s}$ is non-trivial for all $s\geq 0$ (\Cref{fig:kzn}). We also see that there are elements in $E_k^{0,0}$ in the spectral sequences computing
\[\pi_* \EinftyHfpbar(H\fpbar^{K(\bZ/p^k,n)}, H\fpbar^{S^{n+1}})\] 
which support a non-trivial $d_k$ (\Cref{fig:kzmodp}). The last two observations yield the following:
\begin{thm}\label{thm:h-infty-vs-e-infty}\ 
  \begin{enumerate}
    \item \label{en:uncountable} In the category of $\mathcal{E}_{\infty}$--algebras in $H\fpbar$--modules there is an uncountable family of homotopy classes of $\mathcal{E}_{\infty}$--maps inducing the same $H_{\infty}$--map.
    
    \item \label{en:d-k} Moreover for each $n\geq 0$, there is an $H_{\infty}$--map for which the first $n$--obstructions to lifting to an $\mathcal{E}_{\infty}$--algebra map in $H\fpbar$--modules vanish but which fails to lift to an $\mathcal{E}_{\infty}$--map.
  \end{enumerate}
\end{thm}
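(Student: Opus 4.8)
The plan is to read both statements directly off the two spectral-sequence computations summarized in \Cref{fig:kzn,fig:kzmodp}, reinterpreting their $E_\infty$-pages through the dictionary of \Cref{prop:t-alg-ss}: the filtration-zero line records the underlying $H_\infty$-map, while positive filtration measures exactly the finer $\mathcal{E}_\infty$-information that the $H_\infty$-structure cannot detect. The two examples I would use are $K(\bZ,n)$ for part \ref{en:uncountable} and $K(\bZ/p^k,n)$ for part \ref{en:d-k}, each looped down so that the target is $H\fpbar^{S^n}$ (resp.\ $H\fpbar^{S^{n+1}}$); this places the relevant groups on the $t-s=0$ diagonal, so that they contribute to $\pi_0$, the set of homotopy classes of $\mathcal{E}_\infty$-maps.

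For part \ref{en:uncountable} I would take the spectral sequence converging to $\pi_* \EinftyHfpbar(H\fpbar^{K(\bZ,n)},H\fpbar^{S^n})$. By the computation recorded in \Cref{fig:kzn} the diagonal groups $E_\infty^{s,s}$ are non-trivial for \emph{every} $s\geq 0$. Since two $\mathcal{E}_\infty$-maps induce the same $H_\infty$-map precisely when their difference has positive filtration, the set of homotopy classes lifting a fixed $H_\infty$-map is filtered with associated graded $\prod_{s\geq 1}E_\infty^{s,s}$, an infinite product of non-trivial groups, hence uncountable. Concretely, via \Cref{thm:french} this $\pi_0$ is $[S^n,K(\bZ_{\phat},n)]\cong\bZ_{\phat}$ and the fiber over any $H_\infty$-map is a coset of $p\bZ_{\phat}$, which is visibly uncountable. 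The only thing to verify is that no $\lim^1$ pathology or fringe effect collapses these layers, so that distinct elements of the inverse limit genuinely give distinct homotopy classes; this is ensured by the convergence established for these resolvable examples.

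For part \ref{en:d-k}, given $n\geq 0$ I would set $k=n+2$ and use the spectral sequence converging to $\pi_* \EinftyHfpbar(H\fpbar^{K(\bZ/p^k,n)},H\fpbar^{S^{n+1}})$. By \Cref{fig:kzmodp} there is a class in $E_k^{0,0}$ supporting a non-trivial $d_k$. This class is an $H_\infty$-map, i.e.\ an element of $E_2^{0,0}$, which survives the differentials $d_2,\dots,d_{k-1}$; these are exactly the first $k-2=n$ obstructions to realizing it by an actual $\mathcal{E}_\infty$-map, and they all vanish. The surviving differential $d_k\neq 0$ then shows the class is not a permanent cycle, so the underlying $H_\infty$-map does not lift, as claimed.

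The main obstacle is not any new computation — those are already carried out — but the precise obstruction-theoretic translation. I expect the delicate points to be: (i) identifying the filtration-zero quotient with the underlying $H_\infty$-map and justifying that positive filtration is invisible to the $H_\infty$-structure, using \Cref{prop:t-alg-ss,prop:talgss-ghss}; and (ii) matching the successive differentials $d_2,d_3,\dots$ out of $E_r^{0,0}$ with the successive obstructions to an $\mathcal{E}_\infty$-lift, with correct bookkeeping so that a class in $E_k^{0,0}$ supporting $d_k$ indeed has its first $k-2$ obstructions vanishing. Both points require care with the fringe effects of the Bousfield--Kan spectral sequence flagged in \Cref{sec:results}, but in each of these examples the simplifications recorded there apply.
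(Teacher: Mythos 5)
Your proposal is correct and follows essentially the same route as the paper: the paper also derives part (1) from the everywhere-nontrivial diagonal $E_\infty^{s,s}$ in the spectral sequence for $\pi_*\EinftyHfpbar(H\fpbar^{K(\bZ,n)},H\fpbar^{S^n})$ (giving a coset of $p\bZ_{\widehat{p}}$ inside $\bZ_{\widehat{p}}$), and part (2) from the class in $E_k^{0,0}$ supporting a nonzero $d_k$ in the spectral sequence for $\pi_*\EinftyHfpbar(H\fpbar^{K(\bZ/p^k,n)},H\fpbar^{S^{n+1}})$, with the same indexing $k=n+2$.
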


 When $X$ is a degree-wise finite product of Eilenberg--MacLane spaces of finitely generated abelian groups, the calculations above dictate the behavior of the spectral sequences computing $\Top(*, X_{p})$. Namely, the $d_k$ differentials are always induced by non-trivial $k$th order Bockstein operations. In particular this holds if we set $X$ to be the reduced infinite symmetric power of a connected space $Y$ of finite type.  In this case, by the Dold--Thom theorem, 
\[X\simeq \prod_{i\geq 1} K(\widetilde{H}_{i}(Y;\mathbb{Z}),i).\]

Similarly if $Y$ is an $n$--dimensional finite CW-complex, then $X=\Top(Y,K(\bZ,n+1))$ is a connected space weakly equivalent to a product of Eilenberg--MacLane spaces such that $\pi_* X \cong H^{n+1-*}(Y;\bZ)$.

Applying our analysis above to these examples we obtain:

\begin{thm}\label{thm:bockstein}
  Let $Y$ be a connected space of finite type and $Z$ an $n$--dimensional finite CW-complex. Then the $T$--algebra spectral sequences converging to
  \[
    \pi_* \EinftyHfpbar(H\fpbar^{\Sym^{\infty}_* Y},H\fpbar)\cong
    \widetilde{H}_*(Y;\bZ_{p})
  \]
  and 
  \[
    \pi_* \EinftyHfpbar(H\fpbar^{\Top(Z,K(\bZ,n+1))},H\fpbar)\cong
    H^{n+1-*}(Z;\bZ_{p})
  \]
  agree with the corresponding Bockstein spectral sequences from the
  $E_2$ page on.
\end{thm}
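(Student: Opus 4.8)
The plan is to assemble the theorem from the factorwise Eilenberg--MacLane computations carried out above, so the proof is largely a matter of organizing work already done and then matching it against the classical Bockstein spectral sequence. Both source spaces have already been presented as degree-wise finite products of Eilenberg--MacLane spaces of finitely generated abelian groups: $\Sym^{\infty}_* Y$ by the Dold--Thom splitting $\prod_{i\geq 1}K(\widetilde{H}_i(Y;\bZ),i)$, and $\Top(Z,K(\bZ,n+1))$ by the product decomposition with $\pi_* \cong H^{n+1-*}(Z;\bZ)$. Splitting each coefficient group into its free rank and $p$-primary cyclic summands $\bZ/p^k$ (prime-to-$p$ torsion being invisible to $\fpbar$-cohomology and to $p$-completion) reduces matters to the factors $K(\bZ,i)$ and $K(\bZ/p^k,i)$, whose $T$--algebra spectral sequences are exactly those recorded in \Cref{fig:kzn,fig:kzmodp}.

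Next I would globalize from factors to products. Applying $\fpbar$-cochains carries a product of spaces to a coproduct of $\mathcal{E}_\infty$--algebras, and since Andr\'e--Quillen cohomology sends coproducts to products — the splitting principle already used in \Cref{sec:earlier} — the full $T$--algebra spectral sequence decomposes as the direct sum (in each bidegree) of the factorwise spectral sequences, matching the additive splitting of $\widetilde{H}_*(Y;\bZ_{\widehat p})$. On the classical side, the Bockstein spectral sequence on $\widetilde{H}_*(Y;\fp)$ (resp.\ $H^*(Z;\fp)$) splits over the cyclic Bockstein summands of the integral (co)homology in exactly the same way. It therefore suffices to match, summand by summand, the $T$--algebra spectral sequence of a single $K(A,i)$ with the Bockstein spectral sequence of its (co)homology: a free summand should correspond to a class surviving to $E_\infty$ whose reconstruction of $\bZ_{\widehat p}$ proceeds by nontrivial extensions, and a $\bZ/p^k$ summand to a single $d_k$. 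The preceding analysis supplies exactly this dictionary, since it shows the only differentials occurring are $d_k$'s induced by $k$th order Bockstein operations, while the $\beta$-periodic resolution used to compute the $E_2$ term has the Bockstein as its sole structure map.

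The main obstacle is to promote this agreement of patterns into a genuine natural isomorphism of spectral sequences from $E_2$ onward. Concretely one must (i) fix the reindexing sending the filtration degree $s$ of the $T$--algebra spectral sequence to the page of the Bockstein spectral sequence and check that it identifies the $E_2$ pages, using the computation of the $\Ext_{UM}$ groups from the $\beta$-periodic resolution together with the Mandell/French identification invoked in \Cref{prop:talgss-ghss,thm:french}; and (ii) show that the differential labelled $d_k$ above is \emph{literally} the topologically defined $k$th order Bockstein $\beta_k$, and not merely an abstractly isomorphic operation of the same order. Step (ii) is the delicate point: it requires tracking the module and Bockstein structure through French's comparison and through the Eilenberg--Moore spectral sequence used above to compute $H^*(K(\bZ/p^k,n))$, verifying that the order-$k$ Bockstein connecting the two fundamental classes is carried to $\beta_k$ compatibly across all Eilenberg--MacLane factors and naturally in $Y$ and $Z$. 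Granting this, the summandwise isomorphisms assemble to the asserted natural isomorphism, and the abutments agree by construction.
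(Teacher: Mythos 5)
Your proposal follows essentially the same route as the paper: the paper likewise reduces to the Eilenberg--MacLane factors $K(\bZ,i)$ and $K(\bZ/p^k,i)$ via the Dold--Thom splitting and the product decomposition of $\Top(Z,K(\bZ,n+1))$, globalizes using the fact that Andr\'e--Quillen cohomology takes coproducts to products, and reads off the $d_k$ differentials and extensions from the computations recorded in \Cref{fig:kzn,fig:kzmodp}. The only difference is one of emphasis: you explicitly flag the need to identify the $d_k$ with the literal $k$th order Bockstein, a point the paper asserts from its Eilenberg--Moore analysis without further elaboration.
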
 

\section{The $E_2$--model structure and a comparison
result}\label{sec:e2-model}
Let $\sP\subset \ho\HkMod$ be the class of spectra containing all suspensions and desuspensions of $Hk$ where $k$ is a field. Following \cite{Bou03} we will say a map $f\colon C\rightarrow D$ is $\sP$--epi if the induced map
\[ \ho\HkMod(P,C)\rightarrow \ho\HkMod(P,D)\] 
is surjective for all $P\in \sP$, or equivalently if the induced map on homotopy groups is a surjection. An $Hk$--module $P$ is $\sP$--projective if 
\[\ho\HkMod( P,C)\rightarrow \ho\HkMod(P,D) \]
is surjective for all $\sP$--epi maps $C\rightarrow D$. A morphism $A\rightarrow B$ is called a $\sP$--projective cofibration if it has the left lifting property against all $\sP$--epi maps. Clearly $\hkmod$ has enough $\sP$--projectives in the sense that every $X\in \hkmod$ is the target of a $\sP$--epi map from a $\sP$--projective.

\begin{defn}\
\begin{enumerate}
  \item A map $f\colon A\rightarrow B$ of simplicial $Hk$--modules is a $\sP$ or resolution weak equivalence if the induced map $\pi_* A \rightarrow \pi_* B$ is an equivalence of graded simplicial abelian groups. Equivalently by the Dold--Kan correspondence, $f$ is a $\sP$--weak equivalence if, for each $n$, the induced map between normalized chain complexes $N(\pi_n A)\rightarrow N(\pi_n B)$ is a quasi-isomorphism.

  \item A map $f$ is a $\sP$--fibration if it is a Reedy fibration and for all $n$, $\pi_n f$ is a fibration of simplicial $k$--modules or equivalently, $N(\pi_n A)\rightarrow N(\pi_n B)$ is surjective in positive degrees.

  \item Finally we will say that $f$ is a $\sP$--cofibration if each of the induced latching maps \[ A_n \coprod_{L_n A} L_n B\rightarrow B_n \] is a $\sP$--projective cofibration.
\end{enumerate}
\end{defn}
By \cite[Thm.~3.3]{Bou03}, \cite[p.~19]{GoH04} the $\sP$--fibrations, $\sP$--cofibrations, and $\sP$--weak equivalences form a simplicial cofibrantly generated model structure on $\shkmod$ which we will call the resolution model structure.

Now the forgetful functor 
\[U\colon \EinftyHk\rightarrow \hkmod\]
is a simplicial monadic right Quillen functor. Here a map $f$ of $\mathcal{E}_{\infty}$--algebras is a weak equivalence/fibration if and only if $Uf$ is a weak equivalence/fibration. If we let $s\sC=\sC^{\Delta^{\op}}$ denote the category of simplicial objects in $\sC$. The forgetful functor above extends to a simplicial monadic right Quillen functor
\[U\colon \sEinftyHk \rightarrow \shkmod\] 
where $\shkmod$ is equipped with the resolution model structure above \cite[Thm.~3.12]{GoH04}.

Indeed since these are simplicial categories, we have a natural path object and by standard arguments with cofibrantly generated model categories (e.g., \cite[Thm.~3.1]{Sch01}), the only essential point to check is the existence of a fibrant replacement functor. For this purpose one can take the fibrant replacement in the Reedy model structure.

We can regard a given $\mathcal{E}_{\infty}$--algebra $R$ as a constant simplicial object and take its cofibrant replacement $R_{\bul}$ in the resolution model structure. As in \cite[Thm.~3.12]{GoH04}, by taking a level-wise weakly equivalent diagram we can further assume, by a process they call `subdivision', that the underlying degeneracy diagram of $R_\bul$ is obtained by a left Kan extension from a level-wise free diagram on a discrete category\footnote{As explained in \cite{GoS07}, in the context of simplicial commutative algebras, the subdivided complex can be obtained by taking the canonical cellular replacement arising from the cofibrantly generated resolution model structure on $\sEinftyHk$.}. This immediately implies the diagram is Reedy cofibrant which in turn implies that the natural maps $|\sk_n R_{\bul}|\rightarrow |\sk_{n+1}R_{\bul}|$ are cofibrations of $\mathcal{E}_{\infty}$ algebras and that $|R_{\bul}|$ is cofibrant.

By mapping into a fibrant $\mathcal{E}_{\infty}$--ring spectrum $Y$, we now obtain a Reedy fibrant cosimplicial space $\EinftyHk(R_{\bul},Y)$, from which we obtain a corresponding Bousfield--Kan spectral sequence \cite{Bou89}: 
\[ \pi^s\pi_t \EinftyHk(R_{\bul},Y)\Longrightarrow \pi_{t-s}\EinftyHk(|R_{\bul}|,Y).\]
The $E_2$ term exists because $R_\bul$ is a simplicial resolution by $H$--cogroup objects and, as a consequence, the cosimplicial mapping space is an $H$--space in each degree. By the results of \cite[\S 3]{GoH04} we see that $|R_{\bul}|\xrightarrow{\sim}R$, so this spectral sequence converges to the desired target.
 
Now associated to any operad $\sO$ is a monad $T_{\sO}$ on $\HkMod$ such that the categories of algebras over the operad $\sO$ is equivalent to the category of algebras over the monad $T_{\sO}$. As a functor $T_{\sO}$ takes a module to the free $\sO$--algebra on that module. When the category of $\sO$--algebras admits a model structure induced from $\HkMod$ and we have a functorial identification of the free algebras, the monad $T_{\sO}$ descends to a monad $T_{H_*\sO}$ on $\ho\HkMod$. Since $k$ is a field, the latter category is equivalent to the category of graded $k$--vector spaces. In our case, $\sO=\mathcal{E}_{\infty}$, and the category of $T_{H_*\sO}$--algebras is the category of algebras over the extended Dyer--Lashof algebra that satisfy admissibility relations. Rationally this is just the category of graded commutative $\bQ$--algebras.

Now $\pi_*R$ is naturally an $H_*\sO$--algebra which we can regard as a constant simplicial object in simplicial $H_*\sO$--algebras. Using the machinery of \cite{Qui69} the category of such objects $\sHoOAlg$, inherits the structure of a simplicial model category, where the weak equivalences and fibrations are those of the underlying simplicial sets.

By construction of our resolution, $R_{\bul}$ has a free underlying degeneracy diagram, so $\pi_*R_{\bul}$ has a free underlying degeneracy diagram. It follows from \cite[Prop.~4.1]{GoH04}, that $\pi_*R_{\bul}\rightarrow \pi_*R$ is a cofibrant replacement in $\sHoOAlg$. Moreover taking homotopy groups level-wise induces an isomorphism
\begin{equation}\label{eqn:homotopy-homs}
  \ho\EinftyHk(R_{\bul},Y)\xrightarrow{\simeq} \HoOAlg(\pi_*R_{\bul},\pi_*Y).
\end{equation}
This is the $\pi_0$--analogue of the following equivalence for positive
$t$:
\[ 
  \pi_t (\EinftyHk(R_{\bul},Y),f)\cong \ho(\EinftyHk)_{Y}(R_{\bul},Y^{S^t})
  \xrightarrow{\simeq} \HoOAlg_{\pi_* Y}(\pi_*R_{\bul},\pi_*Y^{S^t}).
\]
Now $\pi_* Y^{S^t}\cong \pi_* Y \oplus \pi_{*+t} Y$ as graded modules, and as algebras it is a square zero extension.


Since such square zero extensions represent abelian group objects in the overcategory $\HoOAlg_{\pi_* Y}$,
\[\HoOAlg_{\pi_* Y}(\pi_*R_{\bul},\pi_*Y^{S^t})\]
is a cosimplicial abelian group, which by Dold--Kan corresponds to a cochain complex. Since $\pi_*R_{\bul}\rightarrow \pi_*R$ is a cofibrant replacement in $\sHoOAlg$, the $s$th cohomology group of this complex is, by definition, the $s$th Quillen cohomology group $H^s_{\HoOAlg}(\pi_* R; \pi_*Y^{S^t})$. Similarly \eqref{eqn:homotopy-homs} defines a cosimplicial set. Applying $\pi^0$ or equivalently, taking the equalizer of $d^0$ and $d^1$, we obtain
\[
\pi^0\pi_0 \EinftyHk(R_{\bul},Y)\xrightarrow{\simeq} \HoOAlg(\pi_*R,\pi_*Y).
\]
These observations combine to give a complete description of the
$E_2$ term of the Goerss--Hopkins spectral sequence. We
will call the groups $\pi^s \pi_t \EinftyHk(R_{\bul},Y)$ the
$(s,t)$--derived functors of $R$ (with respect to $Y$).

Now suppose we take an alternative resolution $f\colon\widetilde{R}_{\bul}\rightarrow R$, such that $f$ is a resolution weak equivalence and $\EinftyHk(\widetilde{R}_{\bul},Y)$ is a Reedy fibrant diagram of $H$--spaces. We are interested in the case when $\widetilde{R}_\bul$ is \emph{not} cofibrant in the resolution model structure. Associated to these resolutions we have two spectral sequences
\begin{align*}
  \pi^s \pi_t \EinftyHk(R_{\bul},Y) \Longrightarrow &
  \pi_{t-s}\EinftyHk(|R_{\bul}|, Y) \textrm{ and }\\
  \pi^s \pi_t \EinftyHk(\widetilde{R}_{\bul},Y) \Longrightarrow &
  \pi_{t-s}\EinftyHk(|\widetilde{R}_{\bul}|, Y).
\end{align*}
It is natural to ask under which hypotheses the spectral sequences agree.  

\begin{thm}\label{thm:flat-resolutions}
  Suppose $\widetilde{R}_{\bul}\rightarrow R$ is a resolution weak equivalence satisfying the above conditions. Let $P_{\bul}\widetilde{R}_n\rightarrow \widetilde{R}_n$ be a cofibrant replacement in the resolution model structure. Suppose that for each $n$ and pair $(s,t)$ of positive integers, the cohomotopy groups $\pi^s\pi_t \EinftyHk(P_{\bul}\widetilde{R}_n, Y),f)$ vanish for all potential base points $f$. Then there is a natural isomorphism between the totalization spectral sequences associated to applying $\EinftyHk(-,Y)$ to $\widetilde{R}_{\bul}$ and a cofibrant resolution of $R$. Moreover this isomorphism is induced by a weak equivalence between the totalizations.
\end{thm}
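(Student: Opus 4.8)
The plan is to remove the failure of cofibrancy of $\widetilde{R}_{\bul}$ by resolving it one more time in the resolution direction and then running a Fubini--type comparison for the resulting double resolution. Concretely, I would assemble the levelwise cofibrant replacements $P_{\bul}\widetilde{R}_n\to\widetilde{R}_n$ into a bisimplicial $\mathcal{E}_\infty$--algebra $P_{\bul}\widetilde{R}_{\bul}$, cofibrant in the new (resolution) direction and augmented over $\widetilde{R}_{\bul}$, and then apply $\EinftyHk(-,Y)$ to obtain a Reedy fibrant bicosimplicial space $Z$ with $Z^{m,n}=\EinftyHk(P_m\widetilde{R}_n,Y)$. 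The two cosimplicial directions of $Z$ together with its diagonal give three totalizations, and the whole argument is organized around comparing them.

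First I would dispose of the part that needs no hypotheses. For each fixed original degree $n$ the simplicial object $P_{\bul}\widetilde{R}_n$ is a cofibrant resolution of $\widetilde{R}_n$, so $|P_{\bul}\widetilde{R}_n|\xrightarrow{\sim}\widetilde{R}_n$ and hence $\Tot\EinftyHk(P_{\bul}\widetilde{R}_n,Y)\xrightarrow{\sim}\EinftyHk(\widetilde{R}_n,Y)$. Totalizing $Z$ only in the resolution direction therefore produces a cosimplicial space that is levelwise weakly equivalent to $\EinftyHk(\widetilde{R}_{\bul},Y)$. Since a levelwise weak equivalence of Reedy fibrant cosimplicial spaces induces an isomorphism of totalization spectral sequences from $E_2$ on (the $E_2$ term depends only on the cosimplicial homotopy groups), together with a weak equivalence of totalizations, the iterated totalization $\Tot_n\Tot_{\mathrm{res}}Z$ already carries the spectral sequence associated to $\widetilde{R}_{\bul}$.

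The role of the hypothesis is to identify this iterated totalization with the diagonal, i.e.\ with the spectral sequence of a cofibrant resolution of $R$. The bisimplicial diagonal lemma shows $\mathrm{diag}\,P_{\bul}\widetilde{R}_{\bul}\to\widetilde{R}_{\bul}\to R$ is a resolution weak equivalence, and $\EinftyHk(\mathrm{diag}\,P_{\bul}\widetilde{R}_{\bul},Y)$ is exactly the diagonal cosimplicial space of $Z$. For fixed $n$ the spectral sequence of the resolution direction converges to $\pi_*\EinftyHk(\widetilde{R}_n,Y)$ with $E_2=\pi^s\pi_t Z^{\bul,n}$, and the vanishing hypothesis says precisely that this $E_2$ is concentrated on the edge $s=0$ in the range contributing to the abutment: the off-edge entries with $s,t>0$ vanish for every base point $f$, while entries with $t=0<s$ lie in negative total degree. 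Thus each vertical spectral sequence collapses onto $\pi_t\EinftyHk(\widetilde{R}_n,Y)$. Feeding this collapse into the Eilenberg--Zilber/Fubini comparison for the bicosimplicial space $Z$ identifies the diagonal spectral sequence with the iterated spectral sequence obtained above from $E_2$ on, while the Eilenberg--Zilber equivalence identifies the two totalizations. Chaining these comparisons yields the asserted natural isomorphism, induced by a weak equivalence of totalizations.

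The hard part will be this last step: making the bicosimplicial Fubini comparison precise and verifying that the vertical collapse really forces the diagonal spectral sequence to reduce to the single $n$--direction one at $E_2$. This is where the quantifier over all potential base points $f$ is essential, since the cosimplicial spaces are only $H$--spaces and the relevant fibrations, cohomotopy groups, and their naturality must be controlled simultaneously over every component. A secondary technical point is checking that $\mathrm{diag}\,P_{\bul}\widetilde{R}_{\bul}$, which is only cofibrant in one direction, nonetheless computes the cofibrant--resolution spectral sequence; I would handle this by comparing it to an honest cofibrant resolution $R_{\bul}\to R$ via the invariance of the totalization spectral sequence under the choice of resolution.
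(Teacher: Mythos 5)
Your overall architecture coincides with the paper's: assemble the levelwise cofibrant replacements into a bisimplicial algebra $P_{\bul}\widetilde{R}_{\bul}$ sitting in a zig-zag between $\widetilde{R}_{\bul}$ and an honest cofibrant resolution $R_{\bul}$, use the vanishing hypothesis to collapse the resolution-direction spectral sequence onto its $0$--line so that a Fubini-type comparison identifies the $E_2$ term of the diagonal with that of $\widetilde{R}_{\bul}$, and handle the other leg by invariance of the Goerss--Hopkins spectral sequence under choice of cofibrant resolution. The paper runs the Fubini step degreewise, applying the double-complex spectral sequence to the bicosimplicial abelian groups $\pi_t\EinftyHk(P_{\bul}\widetilde{R}_{\bul},Y)$ for each fixed $t>0$ rather than iterating $\Tot$ at the space level, but that is bookkeeping rather than substance.

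The genuine gap is in what you call the secondary technical point. You propose to show that $\mathrm{diag}\,P_{\bul}\widetilde{R}_{\bul}$ computes the cofibrant-resolution spectral sequence by comparing it to $R_{\bul}$ ``via the invariance of the totalization spectral sequence under the choice of resolution.'' That invariance statement (\cite[p.~24]{GoH04}) only compares two \emph{cofibrant} resolutions; for a resolution weak equivalence out of a non-cofibrant resolution it is precisely the assertion you are trying to prove, and it is false in general without the vanishing hypothesis. As stated, the step is circular. The repair is to build the biresolution more carefully: take the cofibrant replacement of $\widetilde{R}_{\bul}$, regarded as constant in the resolution direction, in the Reedy model structure on simplicial objects in the resolution model structure (with a subdivision making the underlying degeneracy diagrams free). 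The diagonal functor carries cofibrations of that Reedy--resolution structure to resolution cofibrations (\cite[Lemma~6.10]{Bou03}), so $\mathrm{diag}\,P_{\bul}\widetilde{R}_{\bul}$ is then genuinely resolution cofibrant and invariance applies. Two smaller omissions: the $t=0$ row, i.e.\ the $E_2^{0,0}$ terms recording the sets of potential base points, is not reached by your collapse argument and needs the separate bicosimplicial-set identity $\pi^0\pi^0\pi_0\cong\pi^0\pi_0$ of the diagonal; and the promotion of the $E_2$ isomorphism on the $R_{\bul}$ leg to a weak equivalence of totalizations still requires the convergence result of \cite[p.~73]{Bou89} rather than Eilenberg--Zilber.
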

\begin{proof}
Following ideas from homological algebra, to compare these spectral sequences we will construct a biresolution which maps to both.  For this we consider the Reedy model structure on simplicial objects in the resolution model structure on $\sEinftyHk$, which we call the Reedy--resolution model structure.\footnote{French uses an analogously defined injective--resolution model structure. By using the Reedy variant we can use any of the categories of spectra considered in \cite{GoH04} and not just the combinatorial models. We can also directly apply the results of \cite{Bou03}.} The first/horizontal simplicial direction we will refer to as the Reedy direction and the second/vertical direction as the resolution direction.

Regard $\widetilde{R}_{\bul}$ as a constant bisimplicial algebra in the
resolution direction. By applying cofibrant replacement and subdivision
to the map $\widetilde{R}_{\bul}\rightarrow R$ we obtain a (weak) map of
bisimplicial algebras
\[P_{\bul}\widetilde{R}_{\bul}\rightarrow R_{\bul},\] 
which is level-wise free and cofibrant. 

So we have a zig-zag of diagrams 
\begin{equation}\label{eqn:bico}
 \widetilde{R}_{\bul} \leftarrow P_{\bul}\widetilde{R}_{\bul} \rightarrow R_{\bul}.
\end{equation} 
After taking diagonalizations and mapping into $Y$ we obtain maps of cosimplicial spaces:
\begin{equation}\label{eqn:comparison}
  \EinftyHk(\widetilde{R}_{\bul},Y) \rightarrow \EinftyHk(d(P_{\bul}\widetilde{R}_{\bul}),Y) \leftarrow \EinftyHk(R_{\bul},Y). 
\end{equation}
We want to see that these maps induce isomorphisms of spectral sequences from the $E_2$ page on for each choice of potential base point $f$. In particular, we want to see that the $E_2^{0,0}$ terms, which are the sets of potential base points and do not depend on any fixed choice of base point, are isomorphic. Once we have constructed these equivalences of spectral sequences, we can apply \cite[p.~73]{Bou89} to see that these spectral sequence isomorphisms are induced by weak equivalences between their totalizations.

Now the left arrow in \eqref{eqn:bico} is a Reedy--resolution weak equivalence by construction and by the dual of \cite[Lemma 6.9]{Bou03} the induced map on diagonalizations is a resolution weak equivalence. The diagonal functor from the Reedy--resolution model structure to the resolution model structure also preserves cofibrations by \cite[Lemma 6.10]{Bou03}. So after diagonalizing \eqref{eqn:bico} the right two objects become resolution cofibrant. In each case, applying subdivision if necessary and using our assumption on $\widetilde{R}_\bul$, we see that \eqref{eqn:comparison} is a diagram of Reedy fibrant cosimplicial spaces which are $H$--spaces in each degree. As a consequence, we see that the associated Bousfield--Kan spectral sequences have well-defined $E_2$ terms and the maps in \eqref{eqn:comparison} induce morphisms of spectral sequences once we have chosen a compatible system of base points.

Now the induced map $d(P_{\bul}\widetilde{R}_{\bul}) \rightarrow R_{\bul}$ is a map over resolution weak equivalences to $R$ and hence a resolution weak equivalence. Since these are both cofibrant in the resolution model structure and the Goerss--Hopkins spectral sequence is invariant under choice of cofibrant resolution \cite[p.~24]{GoH04}, this map induces a morphism of spectral sequences which is an isomorphism at $E_2$. 

Now we will want to use the `totalization' spectral sequence (see \Cref{rem:totalization-ss})
\begin{equation}\label{eqn:tot-ss}
  \pi^v\pi^h \pi_t \EinftyHk(P_{\bul}\widetilde{R}_{\bul}, Y) \Longrightarrow
\pi^{v+h}\pi_t \EinftyHk(d(P_{\bul}\widetilde{R}_{\bul}), Y) 
\end{equation} 
with an edge homomorphism
\[ \pi^v \pi_t \EinftyHk(\widetilde{R}_{\bul}, Y) \hookrightarrow \pi^v\pi^h \pi_t \EinftyHk(P_{\bul}\widetilde{R}_{\bul}, Y) \]
given by inclusion of the 0th horizontal cohomotopy groups.  Here we finally apply our assumption that $\widetilde{R}_n$ has no higher derived functors. This implies that the above spectral sequence collapses to the 0th column. So the edge homomorphism gives the desired equivalence
\[\pi^v \pi_t \EinftyHk(\widetilde{R}_{\bul}, Y) \cong \pi^v \pi_t \EinftyHk(d(P_{\bul}\widetilde{R}_{\bul}), Y)\]
of $E_2$ terms. 
\end{proof}

\begin{remark}
  Although we stated \Cref{thm:flat-resolutions} in terms of $\mathcal{E}_{\infty}$--algebras it should be clear that the arguments above are general and can be applied to other Bousfield--Kan spectral sequences constructed from Bousfield's resolution model structure such as those in \cite[\S 5.8]{Bou03}.
\end{remark}

\begin{remark}\label{rem:totalization-ss}
Although the totalization spectral sequence has appeared previously in the literature, the author does not know of any formal account. The construction is standard, but there are a couple of subtleties. With the exception of the part containing $\pi_0$ this is a graded sequence of spectral sequences.

First, for each positive $t$ we will construct a different totalization spectral sequence for the bicosimplicial group $\pi_t\EinftyHk(P_{\bul}\widetilde{R}_{\bul}, Y)$. Since in each bidegree we have an $H$--space, these are bicosimplicial \emph{abelian} groups for each choice of base point in tridegree (0,0,0). So we can apply the Dold--Kan correspondence to rephrase the construction in terms of bicochain complexes and use the standard spectral sequence converging to the cohomology of the total complex. 

There is no accepted definition of the higher cohomotopy of a cosimplicial set. So for $t=0$, there is only a single defined term on either side of \eqref{eqn:tot-ss} and there is no room for differentials. So this `spectral sequence' is just the claim 
\[\pi^0\pi^0 \pi_0 \EinftyHk(P_{\bul}\widetilde{R}_{\bul}, Y) \cong \pi^{0}\pi_0 \EinftyHk(d(P_{\bul}\widetilde{R}_{\bul}), Y). \]
This is a general fact about bicosimplicial sets, which we will now verify.

The lower terms of a bicosimplicial set fit into the following diagram:
  \begin{center}
  \pgfimage{cosimp}
  \end{center}
The dashed arrows are the codegeneracies $s^0_?$. The claim we would like to verify is that the intersection of the equalizers of the pairs $(d^0_a,d^1_a)$ and $(d^0_c,d^1_c)$ is equal to the equalizer of $(d^0_e d^0_c,d^1_e d^1_c)$.

Note that if $d^0_a(x)=d^1_a(x)$ then 
\[d^0_e(d^0_c(x))= d^0_b(d^0_a(x))=d^0_b(d^1_a(x))=d^1_e(d^0_c(x)).\]
So the intersection of the equalizers of $(d^0_a,d^1_a)$ and $(d^0_c,d^1_c)$ is in the equalizer of $(d^0_e d^0_c,d^1_e d^1_c)$. To see the other containment suppose $d^0_e d^0_c(x)=d^1_e d^1_c(x)$. By applying $s^0_e$ to both sides and using the cosimplicial identities we see that $d^0_c(x)=d^1_c(x)$. The commutativity of the diagram shows that $d^0_e d^0_c(x)=d^1_e d^1_c(x)$ implies $d^0_b d^0_a(x)=d^1_b d^1_a(x)$. Applying the codegeneracy $s^0_b$ then shows $d^0_a(x)=d^1_a(x)$.

\end{remark}

In homological algebra one can use flat resolutions of a module to compute $\Tor$ groups. We can regard \Cref{thm:flat-resolutions} as a non-abelian analogue of this result. This is also a generalization of \cite[Thm.~6.2]{Bou03}. 

To apply \Cref{thm:flat-resolutions} we need to find Reedy cofibrant resolutions of homotopy comonoids whose terms have no higher derived functors. It is immediate from the definitions that the bar resolutions for algebras over operads in $\HkMod$ considered in \cite{JoN13} are level-wise homotopy comonoids and projective in the resolution model structure. It follows that the algebras $B_n R$ appearing in this construction have no higher derived functors. By \cite[Prop.~4.22]{JoN13} these resolutions are Reedy cofibrant, so the argument above gives an alternate proof of \Cref{prop:talgss-ghss}.

Now the unstable Adams spectral sequence of Bousfield--Kan arises from mapping a space $X$ into the cobar cosimplicial resolution $\bF_{p}^{\bul+1}Y$. This resolution is constructed from the monad $\bF_{p}$ which takes a simplicial set to the free simplicial $\fp$--vector space on that simplicial set. To compare the unstable Adams spectral sequence of Bousfield--Kan to the Goerss--Hopkins spectral sequence the first step is to show the following:
\begin{thm}\label{thm:vanishing}\
  \begin{enumerate}
    \item \label{it:rational-case} If $X$ is a connected space of $\bQ$--finite type, then the $(s,t)$--derived functors of $H\bQ^{\bQ X}$ with respect to any rational $\mathcal{E}_{\infty}$--algebra $Y$ vanish for $s$ positive.
    \item \label{it:char-p-case} If $X$ is a connected space  of $\bF_p$--finite type, then the $(s,t)$--derived functors of $H\fpbar^{\bF_p X}$ with respect to any $\mathcal{E}_{\infty}$--algebra of the form $H\fpbar^{Y}$ vanish for $s$ positive. Moreover the $(0,t)$--derived functors can be identified with $E_{2,UA}^{0,t}$.
  \end{enumerate}
\end{thm}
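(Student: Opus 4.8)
\section*{Proof proposal for \Cref{thm:vanishing}}

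The plan is to show that $H\fpbar^{\bF_p X}$ (respectively $H\bQ^{\bQ X}$) is \emph{free}, hence projective in the resolution model structure, and then to apply the same principle used for the bar terms $B_n R$ discussed after \Cref{thm:flat-resolutions}: a projective object is its own cofibrant resolution, so its $(s,t)$--derived functors are concentrated in cohomological degree $s=0$, which handles all $t\geq 0$ at once. For the coefficient form of this vanishing in positive internal degree $t$, it is equivalent --- via the $E_2$ identification of \Cref{sec:e2-model} expressing the $(s,t)$--derived functors as $H^s_{\HoOAlg}(\pi_* H\fpbar^{\bF_p X};\pi_* Y^{S^t})$ --- to the assertion that a free object has vanishing positive Quillen cohomology with arbitrary coefficients, which is exactly \cite[Thm.~2.4.(ii)]{Qui70} as applied in the proof of \Cref{thm:genera}. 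Thus everything reduces to freeness of $\pi_* H\fpbar^{\bF_p X}$ as an algebra over the extended Dyer--Lashof algebra (and of $\pi_* H\bQ^{\bQ X}$ as a graded--commutative $\bQ$--algebra).

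To prove freeness I would first identify the space $\bF_p X$. As a simplicial $\fp$--vector space its Postnikov tower splits (Dold--Kan / Moore), giving $\bF_p X\simeq\prod_n K(\widetilde{H}_n(X;\fp),n)$; the $\fp$--finite type hypothesis makes each $\widetilde{H}_n(X;\fp)$ finite--dimensional. Passing to $\fpbar$--cochains and using that $\fpbar$ is a field, Künneth identifies $\pi_* H\fpbar^{\bF_p X}$ with the tensor product $\bigotimes_n H^*(K(\widetilde{H}_n(X;\fp),n);\fpbar)$. Each factor is the cohomology of an Eilenberg--MacLane space, which by the Cartan--Serre computation recalled in \Cref{sec:k-1} is free over the extended Dyer--Lashof algebra once one identifies the operations carried by the $\mathcal{E}_\infty$--cochain structure with the Steenrod operations over the algebraically closed field $\fpbar$; this identification is Mandell's theorem \cite{Man01}. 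A tensor product of free objects is their coproduct in $\HoOAlg$, hence free on the direct sum of generators, so $\pi_* H\fpbar^{\bF_p X}$ is free. The rational case (part \ref{it:rational-case}) is parallel and simpler: $\bQ X\simeq\prod_n K(\widetilde{H}_n(X;\bQ),n)$, there are no Dyer--Lashof operations (\Cref{rem:char-0}), the rational cohomology of each factor is free graded--commutative, and Künneth again yields freeness.

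For the final clause of part \ref{it:char-p-case}, the $(0,t)$--derived functor is the equalizer $\pi^0$ of the cosimplicial abelian group $\pi_t\EinftyHfpbar\bigl((H\fpbar^{\bF_p X})_\bullet,H\fpbar^Y\bigr)$, which for $t>0$ is the group of maps in $\HoOAlg_{\pi_* H\fpbar^Y}$ from the free object above into $\pi_*(H\fpbar^Y)^{S^t}$. Because the source is free, such a map is determined by the image of its generating $\fpbar$--module $\widetilde{H}^*(X;\fpbar)$, so the group is computed by maps of $\fpbar$--modules into the shifted coefficients. On the unstable Adams side $E_{2,UA}^{0,t}$ is computed by the analogous maps of unstable modules over the Steenrod algebra, and Mandell's comparison of the two categories of operations over $\fpbar$ matches the two groups, yielding the asserted isomorphism.

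I expect the main obstacle to be concentrated entirely in the use of Mandell's theorem: identifying the Dyer--Lashof operations supplied by the $\mathcal{E}_\infty$--structure on $\fpbar$--cochains with the (dual) Steenrod operations, together with the essential passage to the algebraically closed field $\fpbar$, since the statement genuinely fails over $\fp$. Granting that dictionary, the freeness of Eilenberg--MacLane cohomology, the vanishing of the positive Quillen cohomology of free objects, and the remaining bookkeeping (connectivity and finite type, Künneth, and base change $\fp\to\fpbar$) are all classical or routine.
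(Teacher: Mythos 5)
Your treatment of part (\ref{it:rational-case}) is essentially the paper's own argument: reduce to the vanishing of positive Andr\'e--Quillen cohomology of $H^*(\bQ X;\bQ)$ as a graded commutative $\bQ$--algebra, split $\bQ X$ into rational Eilenberg--MacLane spaces, apply K\"unneth, and use that $H^*(K(\bQ,n);\bQ)$ is free graded commutative. That half is fine.

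Part (\ref{it:char-p-case}) has a genuine gap at the freeness claim. The $(s,t)$--derived functors in question are Quillen cohomology groups computed in $\HoOAlg$, the category of algebras over the \emph{extended Dyer--Lashof algebra} (equivalently, $H_\infty$--algebras in $H\fpbar$--modules), not in the category of unstable algebras over the Steenrod algebra. By Cartan--Serre, $H^*(K(V,n);\fpbar)$ for a finite-dimensional $\fp$--vector space $V$ is indeed free as an \emph{unstable algebra over the Steenrod algebra}; but it is not free over the extended Dyer--Lashof algebra, because in the free $\mathcal{E}_\infty$--algebra in $H\fpbar$--modules on a generator the operation $P^0$ does not act as the (Frobenius--semilinear) identity, whereas on the cochains of a space it does. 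So $H\fpbar^{K(V,n)}$ is a proper quotient of a free object rather than a projective one, and the principle ``a projective is its own resolution'' that you borrow from the discussion of the bar terms $B_nR$ does not apply. Indeed, if it did, the identification of the positive-degree Quillen cohomology in $\mathit{EUA}$ with that in $\mathit{UA}$ --- the crux of Mandell's and French's work --- would be formal and the passage from $\fp$ to $\fpbar$ would play no role. The actual argument, which the paper cites rather than reproduces (Mandell's explicit resolution in \cite[\S 6]{Man01}, carried out for this purpose in \cite[\S 2.3]{Fre10}), resolves $H\fpbar^{K(V,n)}$ by a short, nontrivial complex of free $\mathcal{E}_\infty$--algebras whose attaching map is essentially $1-P^0$; the vanishing for $s>0$ then follows from the surjectivity of the Artin--Schreier map $x\mapsto x^p-x$ over the algebraically closed field $\fpbar$. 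You correctly flagged the passage to $\fpbar$ as the main obstacle, but your sketch routes around it via a freeness assertion that is false, so the essential content of part (\ref{it:char-p-case}) is missing; the same gap affects your final paragraph, since the computation of the $(0,t)$--term as maps out of a free object likewise requires Mandell's resolution rather than freeness.
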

\begin{proof}
   By the identification in \cite[Ex.~5.9]{JoN13}, to prove \eqref{it:rational-case} we have to show the higher Andr\'e-Quillen cohomology groups of $H^*(\bQ X;\bQ)$ vanish as graded commutative $\bQ$--algebra. These groups will vanish if the cohomology ring is free as a graded commutative algebra. Since $X$ is of finite $\bQ$--type, $\bQ X$ is of finite type and weakly equivalent to a product of rational Eilenberg--MacLane spaces. Using K\"unneth isomorphisms it suffices to show the $H^*(K(\bQ,n);\bQ)$ is free as a graded commutative algebra for $n>0$. This is both easy and classical.

   The proof of \eqref{it:char-p-case} depends on Mandell's explicit resolution \cite[\S 6]{Man01} of $\mathcal{E}_{\infty}$--algebras of this form and appears in \cite[\S 2.3]{Fre10}.
\end{proof}

The functor $\Top^{\op}\rightarrow \EinftyHfpbar$ which sends a space $X$ to the function spectrum $H\fpbar^{X}$ defines a map of cosimplicial spaces 
\[ \Top(X,\bF_{p}^{\bul+1}Y)\rightarrow \EinftyHfpbar(H\fpbar^{\bF_{p}^{\bul+1}Y},H\fpbar^{X}). \]
After replacing $H\fpbar^{\bF_{p}^{\bul+1}Y}$ with a Reedy cofibrant replacement, we obtain a map from the unstable Adams spectral sequence to the latter spectral sequence which, by \Cref{prop:talgss-ghss,thm:flat-resolutions,thm:vanishing}, agrees with the Goerss--Hopkins and $T$--algebra spectral sequences.

Using \Cref{thm:vanishing}.\ref{it:char-p-case} we can identify the $E_2$ term of the latter spectral sequence with $E_{2,\mathit{UA}}$ as described above \cite[\S 2.3]{Fre10}. Although the $E_2$ term of the classical unstable Adams spectral sequence consists of $\Ext$--groups of unstable \emph{coalgebras} over the Steenrod algebra, the map between the spectral sequences is a duality map, and our algebras are of finite type, so we obtain \Cref{thm:french}.

\bibliographystyle{gtart}
\bibliography{biblio}

\end{document}